\numberwithin{equation}{section}
\newtheorem{theorem}{Theorem}[section]
\newtheorem{lemma}{Lemma}[section]
\theoremstyle{remark}
\newtheorem{remark}{Remark}[section]
\newtheorem{definition}{Definition}[section]
\newtheorem*{ack}{Acknowledgement}
\def\re{\operatorname{Re}}
\def\im{\operatorname{Im}}
\def\area{\operatorname{area}}
\def\dens{\operatorname{dens}}
\def\meas{\operatorname{meas}}
\def\c{\operatorname{\mathbb C}}
\def\j{\operatorname{\mathcal{J}}}
\def\f{\operatorname{\mathcal{F}}}
\def\i{\operatorname{\mathcal{I}}}
\def\b{\operatorname{\mathcal{B}}}
\begin{document}
\title[Lebesgue measure of escaping sets]{Lebesgue measure of escaping sets of entire functions}
\author{Weiwei Cui}

\address{Mathematiches Seminar, Christian-Albrechts-Universit\"at zu Kiel, Ludewig-Meyn-Str. 4, 24098 Kiel, Germany.}
\email{cui@math.uni-kiel.de}

\subjclass[2010]{37F10 (primary), 30D05 (secondary)}
\keywords{Entire functions, transcendental dynamics, Lebesgue measure}

\begin{abstract}
For a transcendental entire function $f$ of finite order in the Eremen\-ko-Lyubich class $\b$, we give conditions under which the Lebesgue measure of the escaping set $\i(f)$ of $f$ is zero. This complements the recent work of Aspenberg and Bergweiler, in which they give conditions on entire functions in the same class with escaping sets of positive Lebesgue measure. We will construct an entire function in the Eremenko-Lyubich class to show that the condition given by Aspenberg and Bergweiler is essentially sharp. Furthermore, we adapt our idea of proof to certain infinite-order entire functions. Under some restrictions to the growth of these entire functions, we show that the escaping sets have zero Lebesgue measure. This generalizes a result of Eremenko and Lyubich.
\end{abstract}

\maketitle

\section{Introduction and main results}

The \textit{escaping set} $\i(f)$ of a transcendental entire function $f$ is defined as the set of all points in $\c$ where the iterates $f^n$ of $f$ tend to $\infty$ as $n\to\infty$. Even though the investigation of the Julia set and Fatou set of a transcendental entire function started from Fatou \cite{fatou4}, a thorough study of the escaping set was not undertaken until Eremenko \cite{eremenko3}. Various structures of escaping sets have been investigated, from either topological, geometrical or measure-and-dimension-theoretical points of view. Our aim here is to study escaping sets of transcendental entire functions in view of their two-dimensional Lebesgue measure. 

Recall that the set $S(f)$ of singular values of $f$ is the closure of the set of all critical and asymptotic values of $f$ in $\c$. An entire function $f$ belongs to the \textit{Eremenko-Lyubich class} $\b$ if $S(f)$ is bounded. Dynamics of entire functions in class $\b$ have attracted a lot of interests in recent years, see, for instance, \cite{eremenko2, rempe2, bergweiler6, rempe16}. Functions in this class include, for instance, $\lambda e^z$ and $\sin(\alpha z+\beta)$, where $\lambda \in\c\setminus\{0\}, \alpha, \beta\in\c, \alpha\neq 0$. 

McMullen  showed that $\i(\sin(\alpha z+\beta))$ has positive measure \cite{mcmullen11}. This result was substantially generalized by Aspenberg and Bergweiler \cite{aspenberg1} to functions in class $\b$ with some control on the growth of the functions. To formulate their conditions, consider the function
$$E_{\beta}(z)=e^{\beta z} ~~~\text{~~~for~~~}~\beta\in(0,{1}/{e}).$$
This is a real entire function with one fixed point $\xi$ which is repelling and has multiplier $\lambda=\beta\xi>1$. Around this repelling fixed point, Schr\"oder's functional equation
\begin{equation}\label{s11}
\Phi\left(E_{\beta}(z)\right)=\lambda \Phi(z)
\end{equation}
has a unique local holomorphic solution $\Phi$ normalized by $\Phi(\xi)=0$ and $\Phi'(\xi)=1$. The function $\Phi$ has a continuation to the positive real axis such that (\ref{s11}) still holds for $z\in[\xi, \infty)$. Note that $\Phi$ is increasing on the real axis and $\lim_{x\to\infty}\Phi(x)=\infty$. Moreover, the function $\Phi$ tends to $\infty$ slower than any iterate of the logarithm, in other words, for all $m\in\mathbb{N}$ we have
\begin{equation}\label{phi}
\lim_{x\to\infty}\frac{\Phi(x)}{\log^m x}=0.
\end{equation}
Here $\log^m$ denotes the $m$-th iterate of the logarithm. 

Now the result of Aspenberg and Bergweiler can be stated as follows. Let $f$ be an entire function for which $\{z: |f(z)|>R\}$ has $N$ components for some $R>0$, and suppose that
\begin{equation}\label{abb}
\log\log M(r,f)\leq \left(\frac{N}{2}+\frac{1}{\Phi(r)}\right)\log r
\end{equation}
for large $r$. Then the escaping set of $f$ has positive Lebesgue measure. We call condition (\ref{abb}) the {\textit{Aspenberg-Bergweiler condition}. The Denjoy-Carleman-Ahlfors theorem \cite[p.173]{goldbergmero} gives
\begin{equation}\label{abb11}
\log\log M(r,f)\geq \frac{N}{2}\log r-\mathcal{O}(1).
\end{equation}
Therefore, the Aspenberg-Bergweiler condition ensures that such an entire function grows only at most slightly faster than guaranteed by the Denjoy-Carleman-Ahlfors theorem. Aspenberg and Bergweiler also used the Mittag-Leffler function to show that $\Phi(r)$ in (\ref{abb}) can not be replaced by a positive constant.

We complement the result of Aspenberg and Bergweiler with the following theorem.

\begin{theorem}\label{main result}
There exists an entire function $f$ in class $\b$ with
\begin{equation}\label{examplec}
\log\log M(r,f)\leq \left(\frac{1}{2}+\frac{1}{\log \Phi(r)}\right)\log r +\mathcal{O}(1),
\end{equation}
for which the escaping set has Lebesgue measure zero.
\end{theorem}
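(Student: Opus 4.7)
The approach is to exhibit an explicit example, modelled on the Mittag-Leffler function (which Aspenberg and Bergweiler used to show that $\Phi(r)$ in (\ref{abb}) cannot be replaced by a constant) but calibrated so that the order equals $1/2$ up to the very slow correction $1/\log\Phi(r)$. The key observation is that a transcendental entire function in $\b$ which is bounded on a suitable unbounded subset (for instance a sector, or even a single ray) automatically has $\i(f)$ of two-dimensional Lebesgue measure zero, by the criterion of Eremenko and Lyubich \cite[Theorem 7]{eremenko2}. Thus the task reduces to producing a function in $\b$ of the prescribed growth which is bounded on some unbounded region.

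Concretely, I would define $f$ via a Hankel-type contour integral
$$f(z) = \frac{1}{2\pi i}\int_{\gamma} \frac{e^{\psi(\zeta)}}{\zeta - z}\,d\zeta,$$
where $\gamma$ is a keyhole contour wrapping around the positive real axis and $\psi$ is a carefully tailored variant of $\zeta^{1/2}$. The role of $\psi$ is to produce, via the saddle-point / Laplace method on the positive real axis, the precise growth rate $\log M(r,f) \asymp r^{1/2 + 1/\log\Phi(r)}$, while keeping $f$ bounded on an unbounded set flanking the negative real axis. A Mittag-Leffler-type series representation $f(z) = \sum_n z^n/a_n$ with coefficients $a_n$ chosen so that $\log a_n$ has the asymptotic form $\tfrac{n}{2}\log n - \tfrac{n\log n}{2\log\Phi(n)} + \mathrm{lower\ order}$ would serve as an equivalent alternative.

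With $f$ in hand, the proof then consists of three verifications: (i) $f \in \b$, by identifying the singular values with a bounded set (essentially coming from the asymptotic behaviour along the rays flanking $\gamma$ and from the branch structure of $\psi$); (ii) the growth bound (\ref{examplec}), which reduces to a standard saddle-point estimate of the integral along the positive real axis combined with the prescribed form of $\psi$; and (iii) the Lebesgue measure zero of $\i(f)$, which follows from the Eremenko--Lyubich criterion cited above as soon as $f$ is shown to be bounded on some unbounded region, which is the case by construction.

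The principal obstacle is the fine calibration of $\psi$ so that the correction term in $\log\log M(r,f)$ is \emph{exactly} of size $1/\log\Phi(r)$. Since $\Phi$ grows more slowly than any iterate of the logarithm by (\ref{phi}), the quantity $1/\log\Phi(r)$ is extraordinarily small, and the error terms produced by the saddle-point analysis could easily dominate the intended correction unless every estimate is carried out sharply and the function $\psi$ is engineered to encode the $\log\Phi$ factor directly in its definition. Balancing this delicate growth calibration against the requirement that $f$ remain bounded on a large unbounded region, so that step (iii) applies, is what makes the construction subtle.
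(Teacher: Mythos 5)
The proposal contains a decisive error at step (iii). You claim that a function in $\b$ that is ``bounded on a suitable unbounded subset (for instance a sector, or even a single ray)'' automatically has $\i(f)$ of measure zero by the Eremenko--Lyubich criterion \eqref{EL-condition}. Two things are wrong here. First, the criterion requires $\liminf_{r\to\infty}\frac{1}{\log r}\int_1^r\theta(t)\frac{dt}{t}>0$, where $\theta(t)$ is the angular measure of the set where $|f|$ is small; boundedness on a single ray contributes nothing to $\theta$, so it cannot trigger the criterion. Second, and more importantly, \emph{no} function satisfying \eqref{examplec} can satisfy the Eremenko--Lyubich condition, so this reduction is impossible in principle. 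Indeed, a function of order $1/2$ has at most one tract by Denjoy--Carleman--Ahlfors, and by Tsuji's estimate (Lemma~\ref{tsuji}) a tract of angular width $\beta(t)\leq 2\pi-2\alpha$ with $\alpha>0$ fixed forces
\[
\log\log M(r,f)\ \geq\ \frac{\pi}{2\pi-2\alpha}\,\log r+\mathcal{O}(1)\ =\ \Bigl(\frac12+c_\alpha\Bigr)\log r+\mathcal{O}(1)
\]
with $c_\alpha>0$ a fixed constant; this is incompatible with \eqref{examplec} since $1/\log\Phi(r)\to 0$. Hence any $f$ satisfying \eqref{examplec} is bounded only in a region whose angular width shrinks to zero, so $\theta(r)\to 0$ and the liminf in \eqref{EL-condition} is $0$, not positive. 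This is precisely the borderline regime the paper is after: the Eremenko--Lyubich criterion fails, and the whole point of Theorem~\ref{thm1.2} is to replace the liminf condition by the weaker divergence condition $\sum_k\theta_0(E^k(0))=\infty$, which the paper's construction does satisfy (with $\theta_0=2\varepsilon$, by Lemma~\ref{infinite}). Your proposal never invokes anything of this sort, so it has no mechanism to conclude $\area\i(f)=0$.

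Regarding the construction itself, the Hankel-contour or power-series route is genuinely different from the paper's. The paper builds $f$ as a canonical product $\prod_n(1-z/a_n)$ with the zero-counting function $n(r)=r^{\rho(r)}+\mathcal O(\varepsilon(r)^3 r^{\rho(r)})$ where $\rho(r)=1/2+\varepsilon(r)$ is shown to be a proximate order (Lemma~\ref{prox}). That choice buys two things your sketch does not address: (a) the classical Riemann--Stieltjes/proximate-order machinery yields a sharp asymptotic for $\log|f(re^{i\theta})|$ (Lemma~\ref{asymp}), which is exactly the calibration you flag as the ``principal obstacle'' but do not resolve; (b) since the product has only positive real zeros, $f$ lies in the Laguerre--P\'olya class, so $f'$ also has only real zeros, and all critical values lie in the shrinking wedge where $f$ has been shown bounded --- this is how $f\in\b$ is verified. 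With a Hankel integral you would need a separate argument to locate the singular values, which you have not supplied. None of this is insurmountable, but as it stands your proposal is a plan for a construction, while the actual measure-zero conclusion, which is the heart of the theorem, rests on a criterion that provably does not apply.
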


It may appear from comparing that there is still a gap between the Aspenberg-Bergweiler condition and Theorem \ref{main result}. However, what Aspenberg and Bergweiler actually use is that
\begin{equation}\label{sum1}
\sum_{k=1}^{\infty}\frac{1}{\Phi\left(E^{k}(0)\right)}=\sum_{k=1}^{\infty}\frac{1}{\lambda^k \Phi(0)}<\infty.
\end{equation}
They work with $E_{\beta}$ instead of $E$, but this is irrelevant, see the proof in Lemma \ref{infinite} below. Noting that
\begin{equation}\label{sum111}
\sum_{k=1}^{\infty}\frac{1}{\left(\log\Phi\left(E^{k}(0)\right)\right)^{1+\delta}}=\sum_{k=1}^{\infty} \frac{1}{\left(k\log\lambda + \log\Phi(0)\right)^{1+\delta}}<\infty,
\end{equation}
if $\delta>0$, we see that minor modifications of the proof yield that their result holds with \eqref{abb} replaced by
\begin{equation}
\log\log M(r,f)\leq \left(\frac{N}{2}+\frac{1}{\left(\log\Phi(r)\right)^{1+\delta}}\right)\log r.
\end{equation}
Our result shows that this is not the case for $\delta=0$.

In our proof we will show that $\rho(r):=1/2+1/\log\Phi(r)$ is a proximate order; see Section 2 for the definition of proximate orders. In particular, we show that $\varepsilon(r):=1/\log\Phi(r)$ satisfies
\begin{equation}\label{prvar}
\lim_{r\to\infty}\varepsilon'(r)r\log r =0.
\end{equation}
Our proof of Theorem \ref{main result} also yields the following result.

\begin{theorem}\label{corothm1}
Suppose that $\varepsilon(r)$ satisfies \eqref{prvar} and
\begin{equation}\label{prvar2}
\sum_{k=1}^{\infty}\varepsilon\left(E^{k}(0)\right)=\infty.
\end{equation}
Then there exists an entire function $f\in\b$ with 
\begin{equation}\label{examplec}
\log\log M(r,f)\leq \left(\frac{1}{2}+\varepsilon(r)\right)\log r +\mathcal{O}(1),
\end{equation}
for which the escaping set has Lebesgue measure zero.
\end{theorem}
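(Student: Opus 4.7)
The plan is to carry out essentially the same construction used to prove Theorem \ref{main result}, replacing the specific function $1/\log\Phi(r)$ by an arbitrary $\varepsilon(r)$ satisfying \eqref{prvar} and \eqref{prvar2}. Condition \eqref{prvar} is precisely what is required for $\rho(r):=1/2+\varepsilon(r)$ to qualify as a proximate order in the sense recalled in Section 2, and classical results of Lindel\"of and Valiron then furnish an entire function of the prescribed proximate order. A careful choice of parameters in this construction, exactly as in the proof of Theorem \ref{main result}, produces $f\in\b$ satisfying the growth bound \eqref{examplec} whose tracts have the specific geometry needed for the measure estimate.

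For the measure-zero conclusion, I would revisit the logarithmic-coordinate analysis of Theorem \ref{main result}. Passing to $F=\log f\circ\exp$ on the preimage of a right half-plane, one decomposes the escaping set by tracking how far each successive iterate is pushed out in the real direction. The core estimate from that proof bounds, at step $k$, the conditional measure of points whose orbit continues to follow a prescribed escape pattern by a factor $1-c\,\varepsilon(E^{k}(0))$ for some positive constant $c$ independent of $k$. Iterating yields
\[
\meas\bigl(\{\text{points escaping via the pattern through step }n\}\bigr)\leq C\prod_{k=1}^{n}\bigl(1-c\,\varepsilon(E^{k}(0))\bigr),
\]
and hypothesis \eqref{prvar2} forces the right-hand side to zero. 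Since every escaping orbit eventually follows such a pattern, $\i(f)$ has Lebesgue measure zero.

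The principal obstacle is to show that the per-step estimate from Theorem \ref{main result} really depends on $\varepsilon$ only through its values along the orbit of $0$ under $E$, and only through the two features isolated in the hypotheses: the smoothness and slow variation expressed by \eqref{prvar}, and the divergence of the series in \eqref{prvar2}. The reason this should work is that in class $\b$ the iterates in logarithmic coordinates grow roughly like the iterated exponential, so the natural geometric scale at step $k$ is $E^{k}(0)$. Condition \eqref{prvar} allows one to compare $\varepsilon$ evaluated at neighbouring orbit points, so the estimate is insensitive to the small perturbations that arise when one replaces $E^{k}(0)$ by the actual orbit point. Condition \eqref{prvar2} then supplies exactly the product divergence needed to drive the measure to zero, complementing the convergence \eqref{sum1}--\eqref{sum111} that is essential to the Aspenberg-Bergweiler theorem and marking \eqref{prvar2} as the sharp dividing line between the positive-measure and zero-measure regimes.
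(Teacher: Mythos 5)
Your overall plan is the same as the paper's: build $f$ as a canonical product whose zero--counting function matches a proximate order $\rho(r)=\tfrac12+\varepsilon(r)$, verify that $f\in\b$ with the positive real axis in its only tract, and then invoke the measure--zero criterion. But you have located the ``principal obstacle'' in the wrong place. The measure--theoretic engine is Theorem~\ref{thm1.2} (proved via Theorem~\ref{main theorem}), and it is already stated in complete generality: it only asks that the opening function $\theta(r)$ dominate some decreasing $\theta_0$ with $\sum\theta_0(E^k(0))=\infty$. There is nothing in that argument that ``depends on $\varepsilon$ only through its values along the orbit of $0$''; that insensitivity is built into the statement of Theorem~\ref{thm1.2}, so you need not re--derive the product estimate (and, minor point, the per--step factor is $1/(1+c\,\varepsilon)$ rather than $1-c\,\varepsilon$, though these are interchangeable to leading order).

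The work that actually has to be re--done for a general $\varepsilon$ is in the \emph{construction}: one must re--prove the asymptotic representation of Lemma~\ref{asymp} with $\rho(r)=\tfrac12+\varepsilon(r)$, since that is what shows $|f|$ is small on the curves $\gamma^\pm$ and hence $\theta(r)\geq 2\varepsilon(r)$. The paper flags exactly this as ``the key ingredient.'' Its proof for the specific $\varepsilon=1/\log\Phi$ leans on Lemma~\ref{4.2} and on the slow decay of $\varepsilon$ (for instance, that $b(r)=\exp\{\varepsilon(r)^3\log\log r\}\to\infty$ and that $a(r)^\delta=o(\varepsilon(r)^2)$); for a general $\varepsilon$ one must check that \eqref{prvar} and \eqref{prvar2} — the latter forcing $\varepsilon$ to decay more slowly than any iterated logarithm — still supply the estimates \eqref{erm1}, \eqref{keyin}, \eqref{firste}--\eqref{thirde}. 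Your proposal cites ``classical results of Lindel\"of and Valiron'' to produce an entire function of prescribed proximate order, which gives the growth bound \eqref{examplec}, but not the boundedness of $f$ outside the curvilinear sector; that requires the explicit asymptotics and the Denjoy--Carleman--Ahlfors step (Lemmas~\ref{boundness}, \ref{bdd}, \ref{B}). So the route is right, but the place where the hypotheses \eqref{prvar}--\eqref{prvar2} actually get used is the asymptotics of the canonical product, not the density estimate.
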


As explained above, the method of Aspenberg and Bergweiler shows that \eqref{prvar2} is sharp.

\smallskip
Theorem \ref{main result} and Theorem \ref{corothm1} will follow from Theorem \ref{thm1.2} below which may be of independent interest. We first recall some definitions.

A transcendental entire function is said to be \emph{hyperbolic}, if $f\in\b$ and every element of $S(f)$ belongs to the basin of some attracting periodic cycles of $f$. If $f$ is hyperbolic and $\f(f)$ is connected, then $f$ is of \emph{disjoint type}; see Section $2$ for more details. The order of growth of an entire function $f$ is defined as
\begin{equation}\label{order}
\rho(f)= \limsup_{r\to\infty}\dfrac{\log\log M(r,f)}{\log r}.
\end{equation}
 Here, $M(r,f)$ is the maximum modulus of $f$, defined by 
$$M(r,f)=\max_{|z|=r}\left|f(z)\right|.$$

Given an entire function $f\in\b$ with $S(f)\subset D(0,r_0)$, put
\begin{equation}\label{theta1}
\theta(r):=\meas\left\{t\in[0, 2\pi]: |f(re^{it})|<r_0   \right\}.
\end{equation}
Here $\meas$ denotes the one-dimensional Lebesgue measure. Now our result can be stated as follows.

\begin{theorem}\label{thm1.2}
Let $f\in\b$ be of finite order. Let $\theta$ be as above. Suppose that $\theta(r)\geq \theta_0 (r)$ for large $r>0$, where $\theta_{0}(r)$ is decreasing and satisfies
\begin{equation}\label{theta-condition}
\sum_{k=1}^{\infty}\theta_{0}\left(E^k(0)\right)=\infty.
\end{equation}
Then $\area\i(f)=0$. If, in addition, $f$ is of disjoint type, then $\area\j(f)=0$.
\end{theorem}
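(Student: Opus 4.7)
The plan is to run an Eremenko--Lyubich-style logarithmic-coordinate density argument driven by the new hypothesis on $\theta_0$. Choose $r_0$ so that $S(f) \subset D(0,r_0)$, put $U = \{z : |f(z)| > r_0\}$, and let $V = \exp^{-1}(U)$. The finite-order hypothesis combined with the Denjoy--Carleman--Ahlfors theorem forces $U$ to have only finitely many components (the tracts of $f$), and each component of $V$ is mapped by $F := \log f \circ \exp$ conformally onto the right half-plane $\h_{\log r_0} := \{w : \re w > \log r_0\}$; one then has the standard Eremenko--Lyubich expansion estimate $|F'(w)| \geq (\re F(w) - \log r_0)/(4\pi)$. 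Writing $\i(f)$ as the countable union of preimages $f^{-N}(A_0)$ with $A_0 := \i(f) \cap \{z : f^n(z) \in U \text{ for all } n \geq 0\}$, and using that preimages of null sets under nonconstant entire functions have planar measure zero, it suffices to show $\area(A_0) = 0$. Lifting by $\exp$, this reduces further to showing that
\[
\tilde A := \{w \in V : F^n(w) \in V \text{ for all } n \geq 0 \text{ and } \re F^n(w) \to \infty\}
\]
has planar Lebesgue measure zero.

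The core analytic step is a \emph{one-step density estimate}. In logarithmic coordinates the hypothesis says that along the vertical line $\{\re w = x\}$ the $\im w$-measure of $V^c$ per $2\pi$-period is at least $\theta_0(e^x)$. Choose a family of Euclidean boxes $B \subset \h_{\log r_0}$ of fixed horizontal width and vertical height $2\pi$, placed so that Koebe's distortion theorem applies to every univalent inverse branch of any $F^k$ landing inside a tract. For such a box $B$ centred at a point of real part $x$, integrating the slice inequality gives $\area(B \setminus V) \geq c_1 \theta_0(e^{x})\,\area(B)$, and Koebe then transports this estimate with universal constants across every inverse branch. Fix $w_0 \in \tilde A$ with orbit $w_k := F^k(w_0)$ and $r_k := e^{\re w_k}$, choose the box $B_k$ around $w_k$, and let $\phi_k$ be the inverse branch of $F^k$ along this orbit; set $D_k := \phi_k(B_k)$, a shrinking neighborhood of $w_0$. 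Comparing $D_{k+1} \subset D_k$ and applying the one-step estimate inductively at each level yields
\[
\frac{\area(D_k \cap \tilde A)}{\area(D_k)} \;\leq\; K \prod_{j=1}^{k} \bigl(1 - c\,\theta_0(r_j)\bigr),
\]
with absolute constants $K, c > 0$.

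To close the argument I need $\sum_j \theta_0(r_j) = \infty$ so that the product tends to $0$. The finite-order hypothesis gives $\log M(r,f) \leq r^{\rho(f)+1}$ for large $r$, hence $r_{j+1} \leq \exp(r_j^{\rho(f)+1}) \leq E^2(r_j)$, and iterating yields $r_j \leq E^{2j}(c_2)$ for some $c_2 = c_2(w_0)$. Monotonicity of $\theta_0$, together with the hypothesis $\sum_k \theta_0(E^k(0)) = \infty$, then forces $\sum_j \theta_0(r_j) = \infty$. Consequently no $w_0 \in \tilde A$ is a Lebesgue density point of $\tilde A$, and $\area(\tilde A) = 0$ by the Lebesgue density theorem, proving $\area\,\i(f) = 0$. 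For the disjoint-type case, $\f(f)$ is the immediate basin of the unique attracting fixed point of $f$; every orbit in $\j(f) = \c \setminus \f(f)$ fails to converge to this fixed point, and by standard hyperbolic dynamics in class $\b$ such orbits must escape, giving $\j(f) \subset \i(f)$; combined with the Eremenko--Lyubich inclusion $\i(f) \subset \j(f)$ for class $\b$, this gives $\j(f) = \i(f)$ and the second statement follows. The main obstacle I anticipate is the one-step density estimate itself: transferring the flat $2\pi$-periodic measure bound through the curved inverse branches with uniform constants requires boxes $B_k$ chosen adapted to the hyperbolic geometry of the tracts, and delicate care is needed in regions approaching $\partial V$ to keep all Koebe constants absolute.
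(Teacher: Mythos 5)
The overall strategy — logarithmic change of variables, slice density of the complement of the tracts, Koebe distortion, and divergence of $\sum\theta_0(E^k(0))$ — matches the paper's, but there are two genuine gaps.

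\textbf{The product bound is not justified.} You claim that ``applying the one-step estimate inductively'' along the single nested sequence $D_k=\phi_k(B_k)$ gives
$\frac{\area(D_k\cap\tilde A)}{\area(D_k)}\leq K\prod_{j=1}^k\bigl(1-c\,\theta_0(r_j)\bigr).$
What the one-step estimate actually gives is only $\frac{\area(D_k\cap\tilde A)}{\area(D_k)}\leq 1-c'\theta_0(r_k)$: transport the slice estimate on $B_k$ back through $\phi_k$ by Koebe to see a fraction $\gtrsim\theta_0(r_k)$ of $D_k$ leaving $V$ at step $k$. Since $\theta_0(r_k)\to 0$, this tends to $1$ and is useless. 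The estimates at levels $1,\dots,k-1$ apply to the \emph{larger} sets $D_0,\dots,D_{k-1}$, and points of $D_{k-1}\setminus D_k$ that escape at level $k-1$ tell you nothing about the density inside $D_k$. To cascade the one-step escape fractions into a product you must, at each level $j$, cover the whole surviving set $T_{j-1}\cap P$ by a Vitali family of disks on which the level-$j$ inverse branches have bounded distortion, apply the one-step estimate disk by disk, and sum; this is exactly what the paper does using the covering Lemma \ref{lemma 1} and then iterating the inequality $\area(T_{n-1}\cap P_{n-1})\geq(1+c\,\theta_0(\cdot))\area(T_n\cap P_n)$. Without this covering step the argument does not close.

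\textbf{The disjoint-type conclusion is wrong as stated.} You argue $\j(f)\subset\i(f)$ for disjoint type, hence $\j(f)=\i(f)$. This is false: a disjoint-type Julia set contains repelling periodic points (indeed they are dense in it) and many other non-escaping orbits; only $\overline{\i(f)}=\j(f)$ holds, not equality. The correct mechanism is that for disjoint type one has $\j(f)=\exp(T)$, where $T$ is the set of logarithmic orbits that stay in $W$ forever, escaping or not. Your growth bound $r_j\leq E^{2j}(c_2)$ uses only finite order (not escaping), so your density argument — once repaired via the covering lemma — in fact applies to all of $T$, and you should run it there to get $\area T=0$, hence $\area\j(f)=0$. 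The paper phrases its Theorem \ref{main theorem} this way precisely to avoid your circular appeal to hyperbolic dynamics.

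A smaller remark: the paper first proves the disjoint-type statement under a ``far-out tracts'' hypothesis and then transfers to arbitrary $f\in\b$ using Rempe's rigidity theorem; your decomposition $\i(f)=\bigcup_N f^{-N}(A_0)$ is a legitimate alternative that, if the covering gap is filled, would bypass Rempe. That would be a genuine simplification, but it is not yet established by the argument as written.
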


Some additional hypothesis such as disjoint type is necessary to conclude that $\area\j(f)=0$ in Theorem \ref{thm1.2}. In fact, it was shown by McMullen \cite{mcmullen11} and also follows from Theorem \ref{thm1.2} that $\area\i(e^z)=0$. However, $\j(e^z)=\c$ by a result of Misiurewicz \cite{misiurewicz2}.

Note that for any entire function $f\in\b$, the function
$$f_{\lambda}: \c\to\c;~~~~z\mapsto \lambda f(z)$$
is also in class $\b$ for any $\lambda\in\c\setminus\{0\}$. If we choose $\lambda$ to be sufficiently small, then $f_{\lambda}$ is of disjoint type. We first prove Theorem \ref{thm1.2} for disjoint type functions. In this case we have the conclusion that the area of the Julia set is zero. To transfer to entire functions without being disjoint type, we apply a result of Rempe \cite[Theorem 1.1]{rempe8}. In this situation we see that the area of escaping set is zero.

\smallskip
Now we sketch the construction of the entire function in Theorem \ref{main result}. The main idea is to consider a function $f$ which is a canonical product with zeros distributed along the positive real axis and then control the asymptotic behavior of $f$ outside of a small curvilinear sector containing the positive real axis. By applying the Denjoy-Carleman-Ahlfors theorem, we show that the function is in fact bounded in this sector. We will see that all the critical points of $f$ are contained in the above sector and hence the set of all critical values is bounded. For this function the only possible asymptotic value is $0$, so it is in the Eremenko-Lyubich class $\b$. \hyperref[them1.2]{Theorem \ref{thm1.2}} then implies that the escaping set of such a function has measure zero.

\medskip
In 1992, Eremenko and Lyubich \cite{eremenko2} also gave a condition under which the escaping set of an entire function in class $\mathcal B$ has zero Lebesgue measure. Their condition was formulated in terms of $\theta(r)$ defined by \eqref{theta1}. More precisely, they considered transcendental entire functions satisfying
\begin{equation}\label{EL-condition}
\liminf_{r\to\infty}\frac{1}{\log r}\int_1^r \theta(t)\frac{dt}{t}>0.
\end{equation}
They show that escaping sets of transcendental entire functions in class $\b$ satisfying (\ref{EL-condition}) have zero Lebesgue measure. We call condition (\ref{EL-condition}) the {\textit{Eremenko-Lyubich condition}}. This condition implies that for a transcendental entire function $f\in\b$, if $f$ is bounded in a sector, then the area of the escaping set is zero (in case of finite order, this also follows from Theorem \ref{thm1.2}). An explicit example that illustrates the power of the Eremenko-Lyubich condition is the above-mentioned Mittag-Leffler function.

Eremenko and Lyubich show that (\ref{EL-condition}) is satisfied if $f$ is an entire function of finite order for which the inverse $f^{-1}$ has a logarithmic singularity; see Section $2$ for the classification of singularities. Thus if, in addition, $f$ belongs to class $\b$, then the escaping set of $f$ has measure zero. We show that this also holds for certain functions of infinite order.

\begin{theorem}\label{infinite-order1}
Let $f\in\b$ be a transcendental entire function and $r'>0$. Suppose that the inverse of $f$ has a direct singularity $a\in\mathbb{C}$. Suppose furthermore that $f$ satisfies
\begin{equation}\label{growwww}
\log\log M(r,f) \leq A(r)\log r
\end{equation}
for $r\geq r'$, for some continuous and increasing function $A: [r', \infty)\to\mathbb{R}$ satisfying $A(r)<\log r$ for large $r$ and
\begin{equation}\label{di}
\sum_{k=1}^{\infty} \dfrac{1}{A\left(E^{k}\left(0\right)\right)}=\infty.
\end{equation}
Then $\area \i(f)=0$.
\end{theorem}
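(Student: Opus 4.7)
The strategy is to show that the direct singularity, combined with the growth hypothesis, forces the angular measure $\theta(r)$ defined in \eqref{theta1} to satisfy $\theta(r) \geq c/A(r)$ for some $c > 0$ and all sufficiently large $r$, and then to run the argument underlying Theorem \ref{thm1.2}. By definition of a direct singularity of $f^{-1}$ at $a \in \c$, there exist $\epsilon > 0$ and an unbounded connected component $U$ of $f^{-1}(D(a,\epsilon))$ such that $a \notin f(U)$; in particular, $|f(z)| \leq |a|+\epsilon$ on $U$. Choosing $r_0 > |a|+\epsilon$ with $S(f) \subset D(0,r_0)$, we obtain $\theta(r) \geq \alpha(r)$, where $\alpha(r) := \meas\{t \in [0, 2\pi] : re^{it} \in U\}$.

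To bound $\alpha(r)$ from below, I would apply a Carleman--Ahlfors distortion estimate to the subharmonic function $v(z) := \log^+(|f(z)|/(|a|+\epsilon))$, which vanishes on $U$ and is bounded above by $\log M(r,f) \leq r^{A(r)}$. A careful application (treating each unbounded component of $\c \setminus \overline U$ separately, and reducing if necessary to a simply connected ``logarithmic'' subtract of $U$ via Iversen's theorem) should convert the upper bound on $\log M$ into a pointwise lower bound $\alpha(r) \geq c/A(r)$ for all large $r$, making essential use of the continuity and monotonicity of $A$. This is the main obstacle: the distortion theorem naturally yields an integral rather than pointwise estimate for $1/\alpha(r)$, so passing to a pointwise bound requires either a tract-by-tract argument on the narrowest escape component adjacent to $U$ or an averaging argument exploiting the monotonicity of $A$.

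Setting $\theta_0(r) := c/A(r)$ --- which is decreasing since $A$ is increasing --- the condition \eqref{di} translates into the summability hypothesis \eqref{theta-condition} of Theorem \ref{thm1.2}. Inspection of the proof of Theorem \ref{thm1.2} reveals that the finite-order hypothesis enters only to set up standard logarithmic coordinates and Koebe-type distortion estimates in the escaping region; these remain valid under the weaker bound $A(r) < \log r$, since $\log M(r,f) \leq r^{\log r}$ still admits the logarithmic change of coordinates on the components of $f^{-1}(\{|w| > R\})$ for $R$ sufficiently large. The Borel--Cantelli-type argument of that proof then shows that almost every point of $\i(f)$ must return to the bounded set $\{|f| < r_0\}$ infinitely often, contradicting escape; hence $\area\,\i(f) = 0$. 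A secondary technical task is to verify explicitly that the finite-order hypothesis in Theorem \ref{thm1.2} is indeed removable under the present growth bound, which should amount to checking that the distortion estimates used there depend only on the weaker condition $A(r) < \log r$.
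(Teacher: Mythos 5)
Your strategy runs into a genuine gap that you yourself flag but do not resolve: the Carleman--Ahlfors / Tsuji distortion estimate gives a lower bound for the \emph{integral} $\int dt/(t\alpha(t))$ in terms of $\log\log M$, which (after combining with the hypothesis $\log\log M(r,f)\leq A(r)\log r$ and Cauchy--Schwarz) yields a lower bound on $\frac{1}{\log r}\int_{r^c}^r\theta(t)\,dt/t$, \emph{not} a pointwise bound $\theta(r)\gtrsim 1/A(r)$. Monotonicity of $A$ does not rescue a pointwise estimate, because $\theta$ can dip to zero on a set of small $\log r$-measure while the integral condition still holds. The paper does not attempt a pointwise bound at all; instead it first proves Theorem~\ref{infinite-order2}, a reformulation of the density argument of Theorem~\ref{main theorem} whose hypothesis is precisely the integral condition \eqref{didi} and which simultaneously relaxes finite order to $A(r)<\log r$ (giving $\re G(z)\leq E^2(\re z)$ in the logarithmic coordinates, so the iteration estimates go through). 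Theorem~\ref{infinite-order1} is then reduced to Theorem~\ref{infinite-order2}: one passes to $g=1/(f-a)$, applies Tsuji's lemma to $g$ over the tract $U_0=\{|g|>1/\varepsilon\}$, relates $\log M_{U_0}(r,g)$ to $\log M(r,f)$ by Nevanlinna's first fundamental theorem, and finishes with Cauchy--Schwarz.

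Two secondary points. First, applying the distortion estimate to the subharmonic function $v=\log^+\left(|f|/(|a|+\varepsilon)\right)$ is the wrong vehicle: $v$ vanishes on $U$, so its growth controls the tracts where $|f|$ is large, not the angular measure $\alpha(r)$ of $U$ itself; it is cleaner (and is what the paper does) to apply the estimate to $g=1/(f-a)$, which is \emph{unbounded} in $U$. Second, even once you have the integral estimate, you would still need to produce the comparison $\log M_{U_0}(r,g)\leq C\log M(2r,f)+O(1)$ via Lemma~\ref{maxi}, Lemma~\ref{nevan} and Lemma~\ref{maxag}; this Nevanlinna-theoretic step is absent from your proposal and is essential to transfer the hypothesis on $M(r,f)$ to a bound on the growth of $g$ in the tract.
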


The condition \eqref{di} implies that $A(r)<\log r$ is only a mild restriction.

Compare the theorem with the result of Eremenko-Lyubich we mentioned above. If the function $A(r)$ is bounded above by some constant, that is, $f$ is of finite order then the condition \eqref{di} is automatically satisfied. Then the statement of \hyperref[infinite-order1]{Theorem \ref{infinite-order1}} is yields that of Eremenko and Lyubich. In this sense, we can view this theorem as a generalization of Eremenko and Lyubich's result mentioned above.  Theorem \ref{infinite-order1} will follow from the following result.

\begin{theorem}\label{infinite-order2}
Let $f\in\b$ be a transcendental entire function and $r'>0$. Suppose that $f$ satisfies \eqref{growwww}
and
\begin{equation}\label{didi}
\frac{1}{\log r}\int_{r^c}^{r}\theta(t)\frac{dt}{t}\geq \frac{1}{A(r)}
\end{equation}
for a constant $c$ with $63/65\leq c <1$, and for some continuous and increasing function $A: [r', \infty)\to\mathbb{R}$ satisfying $A(r)<\log r$ for large $r$ and \eqref{di}. Then $\area \i(f)=0$.
\end{theorem}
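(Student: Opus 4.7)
\smallskip
\noindent\textbf{Proof proposal.} The plan is to adapt the proof of Theorem \ref{thm1.2}, replacing the pointwise hypothesis $\theta(r)\geq\theta_{0}(r)$ used there by the averaged hypothesis \eqref{didi}. As in Theorem \ref{thm1.2}, I would first scale $f\mapsto \lambda f$ with $\lambda$ small so that $\lambda f$ becomes of disjoint type, prove the result in the disjoint-type case, and then transfer back to $f$ itself via Rempe's theorem \cite[Theorem 1.1]{rempe8}; both \eqref{growwww} and \eqref{didi} are stable under this scaling up to harmless constants. So I assume from now on that $f$ is of disjoint type.

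Next I would pass to logarithmic coordinates. Above a large level $R$ the set $\{|f|>R\}$ splits into logarithmic tracts, on each lift $\widetilde{T}$ of which the map $F=\log f\circ\exp$ is univalent onto the half-plane $\{\re w>\log R\}$. Set
\[
X_{n}=\{z\in\c : |f^{k}(z)|>R\text{ for every }0\leq k\leq n\},
\]
so that $\i(f)\subseteq\bigcap_{n\geq 0}X_{n}$. The goal is to bound $\area(X_{n}\cap D(0,R_{n}))/\area D(0,R_{n})$ along the scales $R_{n}=E^{n}(0)$.

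The core step is a one-scale density lemma: for each sufficiently large $r$ and each tract component, the relative area in the annular slab $\{r^{c}\leq|z|\leq r\}$ of the tract consisting of points $z$ whose image $f(z)$ satisfies $|f(z)|<r_{0}$ (and which therefore are killed from $X_{n+1}$) is bounded below by $\kappa/A(r)$ for some absolute $\kappa>0$. The averaged condition \eqref{didi} supplies precisely the mass of ``bad'' angles over the logarithmic range $[r^{c},r]$ needed here, while the Eremenko--Lyubich expansion estimate for $F$ together with Koebe distortion on the univalent lift converts angular mass into area. The growth bound \eqref{growwww} controls the oscillation of $|F|$ across the slab, and the numerical restriction $c\geq 63/65$ enters to keep the distortion losses strictly smaller than the gain supplied by \eqref{didi}. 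Iterating the lemma at the scales $R_{n}=E^{n}(0)$ yields
\[
\area\bigl(X_{n}\cap D(0,R_{n})\bigr)\leq \area D(0,R_{n})\prod_{k=1}^{n}\left(1-\frac{\kappa}{A(E^{k}(0))}\right),
\]
and the divergence hypothesis \eqref{di} forces the product to zero. A standard Lebesgue density argument, as in Theorem \ref{thm1.2}, then upgrades this to $\area\i(f)=0$.

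The principal obstacle is the density lemma. In Theorem \ref{thm1.2} the pointwise bound $\theta(r)\geq\theta_{0}(r)$ provides the required mass directly on each circle, but here only an integral bound is available; one must therefore produce, inside a single annular slab $\{r^{c}\leq|z|\leq r\}$ of each tract, a definite-proportion subset on which $|f|<r_{0}$, uniformly in $n$. This is where combining \eqref{growwww}, \eqref{didi}, the Eremenko--Lyubich expansion estimate, and Koebe distortion is most delicate, and where the explicit constraint $c\geq 63/65$ will be used. Once the one-scale estimate is in place, the product iteration along $E^{k}(0)$ and the final return from disjoint type are essentially as in Theorem \ref{thm1.2}.
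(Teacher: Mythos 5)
Your overall plan matches the paper's: scale to disjoint type, pass to logarithmic coordinates, derive a local density estimate from the averaged condition \eqref{didi}, iterate, and return to $f$ via Rempe's rigidity theorem. The role you assign to $c\geq 63/65$ is also roughly right; concretely, the paper applies \eqref{didi} (in logarithmic coordinates, where it becomes $\int_{cx}^{x}\varphi(s)\,ds\geq x/A(E(x))$ with $\varphi(x)=\theta(e^x)$) at $x=\frac{65}{64}\re z_n$, and $c\geq 63/65$ guarantees that the resulting interval $[c\cdot\frac{65}{64}\re z_n,\,\frac{65}{64}\re z_n]$ sits inside $[\frac{63}{64}\re z_n,\,\frac{65}{64}\re z_n]$, which is the range swept out by the square $Q(z_n)$ used in the Koebe argument of Theorem \ref{main theorem}. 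So the averaged hypothesis can be plugged directly into the local density estimate with essentially no new work.

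The iteration step as you wrote it, however, is wrong. You claim
\[
\area\bigl(X_{n}\cap D(0,R_{n})\bigr)\leq \area D(0,R_{n})\prod_{k=1}^{n}\Bigl(1-\frac{\kappa}{A(E^{k}(0))}\Bigr),\qquad R_n=E^n(0),
\]
and conclude from divergence of $\sum 1/A(E^k(0))$. This gives nothing: the product decays only like $\exp(-\kappa\sum_{k\leq n}1/A(E^k(0)))$, a subpolynomial rate in $n$, while $\area D(0,R_n)=\pi R_n^2$ is doubly exponential in $n$, so the right side blows up. Moreover even if $\area(X_n\cap D(0,R_n))\to 0$ along a growing sequence of disks, that would not preclude $\bigcap_n X_n$ from having positive measure inside a fixed ball. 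The correct geometry, as in the proof of Theorem \ref{main theorem}, is the opposite: fix a point $w$ (in logarithmic coordinates), build a \emph{nested, shrinking} sequence of squares $P_n(w)$ with non-degenerate intersection $P_\infty$, and show $\area(T\cap P_\infty)=0$; since $w$ is arbitrary one deduces $\area T=0$. The iterated exponentials $E^{2k}$ in the product come not from the geometric scale of the $P_n$ — those shrink — but from the growth bound \eqref{growwww}, which yields $\re z_n=\re F^n(z)\leq E^{2n}(\re z)$ and hence $\dens(S,Q(z_n))\gtrsim 1/A(E^{2}(\re z_n))\geq 1/A(E^{2n+2}(\re w))$. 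If you replace the growing disks $D(0,R_n)$ by the nested $P_n(w)$ and carry the $E^{2n}$ bookkeeping through the local estimate as above, your sketch becomes a correct outline of the paper's argument.
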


We will prove this theorem for disjoint type entire functions satisfying the conditions in \hyperref[infinite-order2]{Theorem \ref{infinite-order2}}, and then apply the result of Rempe already quoted to obtain the result for all functions satisfying the hypotheses of the theorem.\\


\noindent{\bf\textit{Structure of the article.}} In section $2$ we will give some preliminaries that are required for the proof of the above theorems. Section $3$ is devoted to the proof of Theorem \ref{thm1.2}. Then we construct an entire function in Section $4$ which on the one hand satisfies the conditions in Theorem \ref{thm1.2} and on the other hand shows the essential sharpness of the Aspenberg-Bergweiler condition. Finally, section $5$ extends our methods in Theorem \ref{thm1.2} to the case of infinite-order entire functions in class $\b$, which generalizes the result of Eremenko and Lyubich as claimed.

\section{Preliminaries}

In this section, we give some definitions, notations and some basic dynamic properties of entire functions in class $\b$. Throughout we only consider transcendental entire functions. When we write $f$ we always mean such function.

\subsection{Some definitions and notations} For a transcendental entire function $f$, the \textit{Julia set} $\j(f)$ of $f$ is defined to be the set of all points in $\c$ where the iterates $f^n$ do not form a normal family in the sense of Montel. The complement $\f(f)$ of $\j(f)$ is called the \textit{Fatou set}. See \cite{bergweiler1} and \cite{schleicher3} for an introduction to transcendental dynamics.

A point $a\in\mathbb{C}$ is called a \textit{critical point} of $f$ if $f'(a)=0$, and $f(a)$ is called a \textit{critical value} of $f$. We say that $b\in\c\cup\{\infty\}$ is an \textit{asymptotic value} of $f$, if there exists a curve $\gamma$ tending to $\infty$ such that $f(\gamma)$ tends to $b$. A point $z$ is called a \textit{singularity} of $f^{-1}$, if it is an asymptotic value or a critical value of $f$. The set $S(f)$ of singular values, as mentioned before, is the closure of the set of all critical and asymptotic values of $f$ in $\c$.

Moreover, a singularity $z$ of $f^{-1}$ is called a \textit{direct singularity} if there exists a disk $D_{\chi}(z,r)$ with respect to the spherical metric such that $f$ omits the value $z$ in a component $V_r$ of $f^{-1}(D_{\chi}(z,r))$ for some $r>0$. In particular, a direct singularity is called \textit{logarithmic} if the restriction $f: V_r \to D_{\chi}(z,r)\setminus\{z\}$ is a universal covering for some $r>0$. The domain $V_r$ is called a \textit{direct tract} or \textit{logarithmic tract} over $z$, respectively. We say that an entire function $f$ is in the Eremenko-Lyubich class $\b$, if $S(f)$ is bounded.

\begin{definition}[Hyperbolicity and disjoint-type]
A transcendental entire function $f$ is said to be \emph{hyperbolic}, if $f\in\b$ and every element of $S(f)$ belongs to the basin of some attracting periodic cycles of $f$. If $f$ is hyperbolic and $\f(f)$ is connected, then $f$ is of \emph{disjoint type}.
\end{definition}

For the above definitions, we refer to \cite[Definition 1.1]{bergweiler6} and  \cite[Definition 1.1]{rempe9}. A discussion of the notion of hyperbolicity in the transcendental setting is given in \cite{rempe16}. We remark that, since every Fatou component of a hyperbolic entire function is simply connected \cite[Proposition 3]{eremenko2}, it follows that the Fatou set of a disjoint type entire function is simply connected.

Let $A, B\subset \mathbb{C}$ be measurable. Then the \textit{density} of $A$ in $B$ is defined to be
\begin{equation*}
\dens \left(A, B\right) = \dfrac{\area\left(A\cap B\right)}{\area B}.
\end{equation*}

\subsection{Logarithmic change of variables}

If $f\in \mathcal B$, then by definition we can find a constant, say $r_0 > 0$, such that all the singularities of $f$ lie in $\{z: |z|\leq r_0\}$. This implies that all components of $f^{-1}(\{z: |z|> r_0\})$ are logarithmic tracts over $\infty$. Without loss of generality, by choosing a suitable large constant $r_0$ we may assume that $|f(0)|\leq r_0$. For such functions, we can apply the \emph{logarithmic change of variable}, which was first introduced into transcendental dynamics by Eremenko and Lyubich \cite{eremenko2}. To describe this, we define
\begin{equation}\label{notation}
\begin{aligned}
A&=\{z\in\mathbb{C}:~|z|>r_0\},\\
U&=f^{-1}(A),\\
W&=\exp^{-1}(U),\\
H&=\{z\in \mathbb{C}:\re z>\log r_0\}.
\end{aligned}
\end{equation}
Eremenko and Lyubich \cite[Section 2]{eremenko2} proved that there exists $F: W\to H$ such that the following diagram commutes:
$$
\begin{xy}
(0, 15)*+{W}="a"; (20, 15)*+{H}="b"; %
(0, 0)*+{U}="c"; (20, 0)*+{A}="d"; %
{\ar "a"; "b"}? * !/_3mm/{F};%
{\ar "c"; "d"}? * !/^3mm/{f};%
{\ar "a"; "c"}? * !/^4mm/{\exp};%
{\ar "b"; "d"}? * !/_4mm/{\exp};%
\end{xy}
$$
Moreover, $F$ maps every component of $W$ biholomorphically onto $H$. We say that $F$ is obtained from $f$ by a logarithmic change of variables. 

In the following we shall use the estimate of the modulus of the derivative of $F$, which is given by Eremenko and Lyubich in \cite[Lemma 1]{eremenko2}.
\begin{lemma}[Expanding property]
Suppose $F$ is obtained through the logarithmic change of variable from $f$ as above, then
\begin{equation}\label{estimate of F'}
\left|F'(z)\right|\geq \dfrac{\re F(z)-\log r_0}{4\pi}
\end{equation}
for $z\in W$. 
\end{lemma}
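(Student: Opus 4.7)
I would prove the lemma by applying Koebe's one-quarter theorem to an inverse branch of $F$. Fix $z \in W$ and let $W_0$ be the component of $W$ containing $z$. Since $F|_{W_0}\colon W_0 \to H$ is a biholomorphism, it has a univalent inverse $G\colon H \to W_0$ with $G(F(z)) = z$. Writing $w_0 := F(z)$ and $d := \re w_0 - \log r_0$, the quantity $d$ is exactly the Euclidean distance from $w_0$ to $\partial H$, so the open disk $\{w : |w - w_0| < d\}$ is contained in $H$ and $G$ is univalent on it.

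The geometric heart of the argument is the claim that $W_0$ contains no open disk of radius greater than $\pi$. Indeed, each component $U_0$ of $U = f^{-1}(A)$ is a logarithmic tract of $f$, and hence simply connected; as $\exp\colon \mathbb C \to \mathbb C^*$ is a covering map and $U_0$ is simply connected, the restriction of $\exp$ to each connected component of $\exp^{-1}(U_0)$ is a bijection onto $U_0$. In particular, $\exp$ is injective on $W_0$. But $\exp$ cannot be injective on any open disk of radius $R > \pi$, since the two points obtained by perturbing the center vertically by $\pm \pi i$ lie in the disk and differ by $2\pi i$. Consequently no such disk fits inside $W_0$.

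Koebe's one-quarter theorem, applied to the univalent function $G$ on the disk $\{|w - w_0| < d\}$, yields that its image contains the disk of radius $\tfrac{1}{4} d\,|G'(w_0)|$ centered at $z = G(w_0)$. This image lies in $W_0$, so by the claim above $\tfrac{1}{4} d\,|G'(w_0)| \le \pi$; using the identity $F'(z) G'(w_0) = 1$, this rearranges to exactly
\[
|F'(z)| \;=\; \frac{1}{|G'(w_0)|} \;\ge\; \frac{d}{4\pi} \;=\; \frac{\re F(z) - \log r_0}{4\pi}.
\]
The only genuinely non-trivial step is the simple-connectivity of the tract $U_0$ for $f \in \mathcal B$ transcendental, from which the injectivity of $\exp$ on $W_0$ and hence the disk bound for $W_0$ both follow; once that is granted, the rest is a one-line appeal to the one-quarter theorem and I do not anticipate any further obstacle.
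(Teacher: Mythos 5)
Your argument is correct and is precisely the standard Koebe one-quarter / $\exp$-injectivity argument from Eremenko--Lyubich's Lemma~1 (\cite{eremenko2}), which the paper cites for this statement without reproducing the proof. The one point worth being careful about, which you correctly flag, is the simple connectivity of each tract $U_0$ (it follows, for instance, from the fact that $f\colon U_0\to A$ is an unbranched covering of the annulus $A$ and $f$ is transcendental, so the covering must be infinite-sheeted and hence universal), after which the injectivity of $\exp$ on each component of $W$ and the bound by disks of radius $\pi$ follow as you say.
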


A direct consequence of the expanding property is that $\i(f)\subset\j(f)$ for $f\in\b$ \cite{eremenko2}. Note that for $f$ entire, $\i(f)$ is non-empty \cite{eremenko3}.

\subsection{Quasiconformal equivalence near infinity} As we mentioned in the introduction, Theorem \ref{thm1.2} will be proved first for disjoint type functions. Then we use a result of Rempe \cite[Theorem 1.1]{rempe8} to transfer the result to entire functions not necessarily being of disjoint type. To formulate his result we need the notion of quasiconformal equivalence near infinity. We refer the reader to \cite{ahlfors8} for some basic background on quasiconformal mappings. Following \cite{rempe8}, we say that two entire functions $f, g\in\b$ are \textit{quasiconformally equivalent near $\infty$} if there exist quasiconformal mappings $\varphi, \psi : \c\to\c$ such that
\begin{equation}
\psi(f(z))=g(\varphi(z))
\end{equation}
whenever $|f(z)|$ or $|g(\varphi(z))|$ is large enough. The following result of Rempe roughly says that quasiconformal equivalence near $\infty$ implies quasiconformal conjugacy on some subset of the plane. More precisely, he shows

\begin{theorem}\label{rempe}
Let $f,g\in\b$ be quasiconformally equivalent near infinity. Then there exist $R>0$ and a quasiconformal map $\theta: \c\to\c$ such that
$$\theta\circ f=g\circ\theta~\text{~on~}~\j_{R}(f):=\left\{z\in\c: |f^{n}(z)|\geq R~\text{~for all~}~n\geq 1\right\}.$$
Futhermore, $\theta$ has zero dilatation on $\left\{z\in\j_{R}(f): |f^{n}(z)|\to\infty\right\}$.\\
\end{theorem}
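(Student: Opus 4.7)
The plan is to lift both $f$ and $g$ to the logarithmic coordinates introduced in Section~2 and build the conjugacy $\theta$ by a Thurston-style iterative pullback, using the expansion estimate \eqref{estimate of F'} to force convergence. First, choose $r_0$ large enough that $S(f)\cup S(g)\subset D(0,r_0)$ and that the defining functional equation $\psi\circ f=g\circ\varphi$ holds on $U_f:=f^{-1}(\{w:|w|>r_0\})$. Passing to logarithmic coordinates as in \eqref{notation} gives biholomorphic restrictions $F:W_f\to H_f$ and $G:W_g\to H_g$ on every tract. The quasiconformal maps $\varphi,\psi$ lift to quasiconformal $\widetilde\varphi:W_f\to W_g$ and $\widetilde\psi:H_f\to H_g$ satisfying $\widetilde\psi\circ F=G\circ\widetilde\varphi$; in particular, $\widetilde\varphi$ induces a canonical bijection between the tracts of $f$ and of $g$.

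Next I would set up the pullback. Let $\theta_0:\c\to\c$ be a $K$-quasiconformal map equal to $\psi$ on $\{|w|>r_0\}$ and chosen so that its Beltrami coefficient $\mu_0$ is compactly supported (this can be arranged by post-composing $\psi$ with a holomorphic straightening off a bounded annulus). Define $\theta_n$ inductively by requiring $g\circ\theta_n=\theta_{n-1}\circ f$ on $U_f$, choosing on each tract of $f$ the unique branch of $g^{-1}$ matched to it by $\widetilde\varphi$, and setting $\theta_n:=\theta_0$ outside $U_f$. Because each selected branch of $g^{-1}$ is conformal on its tract, every $\theta_n$ is again $K$-quasiconformal. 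The expansion inequality \eqref{estimate of F'} implies that the sets on which $\theta_n$ and $\theta_{n-1}$ disagree contract exponentially under iterated pullback, so $(\theta_n)$ is a normal family; passing to a limit yields a $K$-quasiconformal $\theta:\c\to\c$ satisfying $\theta\circ f=g\circ\theta$ at every point whose forward orbit remains in $U_f$. Taking $R:=r_0$, this is precisely $\j_R(f)$.

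For the zero-dilatation statement, I would chase the Beltrami coefficient through the pullback. Using the chain rule for complex dilatations together with the conformality of the chosen branches of $g^{-1}$ on each tract, one gets, for $z$ whose orbit stays in $U_f$ through time $n$, the identity $|\mu_n(z)|=|\mu_0(f^n(z))|$. By our choice, $\mu_0$ vanishes outside a compact set $K_0$. If $z\in\i(f)\cap\j_R(f)$ then $|f^n(z)|\to\infty$, so $f^n(z)\notin K_0$ for all large $n$, hence $\mu_n(z)=0$ eventually. The limit Beltrami coefficient then satisfies $\mu(z)=0$ almost everywhere on $\i(f)\cap\j_R(f)$, giving zero dilatation for $\theta$ on this set.

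The hard part will be the bookkeeping in the pullback step: extending the partial conjugacy to a genuine quasiconformal map on all of $\c$ (rather than only on $\j_R$), independent of subsequences, and justifying that the Beltrami coefficients converge in the appropriate $L^\infty$-weak-$\star$ sense so that the measurable Riemann mapping theorem produces the right $\theta$. The delicate point is that outside $\j_R$ the inductive relation $g\circ\theta_n=\theta_{n-1}\circ f$ has to be maintained only as a formal pullback of Beltrami data, and one must verify that this data still defines a solution via the measurable Riemann mapping theorem whose normalization is compatible with the limits on $\j_R$. Controlling the propagation of dilatation under this construction — in particular, showing that the support of the limiting coefficient is pushed off the escaping set — is the core technical input provided by the expansion estimate \eqref{estimate of F'}.
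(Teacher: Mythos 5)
This theorem is not proved in the paper: it is Rempe's result, quoted from \cite[Theorem 1.1]{rempe8}, so there is no ``paper's own proof'' to compare against. Your sketch does follow the correct overall strategy of Rempe's original argument — logarithmic coordinates, a pullback (Thurston-type) iteration producing a uniformly quasiconformal sequence $\theta_n$, expansion from \eqref{estimate of F'} to force convergence on $\j_R(f)$, and a Beltrami chase using the conformality of $f$, $g$, and $\exp$ to propagate dilatation. The bookkeeping issues you flag (gluing the pullback to $\theta_0$ across $\partial U_f$, and getting convergence rather than just normality) are real but manageable and in the spirit of the original proof.

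The genuine gap is in the zero-dilatation step. You want $\theta_0$ to simultaneously (a) equal $\psi$ near infinity, so that the pullback is compatible with the equivalence $\psi\circ f=g\circ\varphi$, and (b) have compactly supported Beltrami coefficient $\mu_0$. These two requirements are incompatible in general: the hypothesis only gives $\psi$ quasiconformal, not conformal near infinity, so $\mu_\psi$ need not vanish on $\{|w|>r_0\}$. Moreover, ``post-composing $\psi$ with a holomorphic straightening'' cannot repair this, because post-composition by a holomorphic map leaves the Beltrami coefficient unchanged; and a genuine straightening $\psi\mapsto\psi\circ\sigma^{-1}$ via the measurable Riemann mapping theorem destroys the semiconjugacy $\psi\circ f=g\circ\varphi$ that drives the whole pullback. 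With $\mu_0$ not compactly supported, the identity $|\mu_n(z)|=|\mu_0(f^n(z))|$ no longer yields $\mu_n(z)\to 0$ as $f^n(z)\to\infty$. To get zero dilatation on the escaping set one instead needs to exploit the limiting $f$-invariance relation $\mu_\theta=f^\ast\mu_\theta$ on $\j_R(f)$ together with a density-point argument driven by the expansion estimate and Koebe distortion; that is the part of the argument your sketch omits and cannot replace by the compact-support device.
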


\section{Proof of \hyperref[them1.2]{Theorem \ref{thm1.2}}}

To prove Theorem \ref{thm1.2}, we shall first prove the following technical version. Recall the notations in \eqref{notation} and \eqref{theta1}.

\begin{theorem}\label{main theorem}
Let $f\in\mathcal B$ be of finite order. Suppose that there exists $R_1$ with $R_1>\max\{2\log r_0, \log r_0 +64\pi\}$ such that
\begin{equation}\label{assume}
W\subset\left\{ z: \re z >R_1   \right\}.
\end{equation}
Suppose that $\theta(r)\geq \theta_0 (r)$ for large $r>0$, where $\theta_{0}(r)$ is decreasing and satisfies \eqref{theta-condition}.
Then $\area\j(f)=0$.
\end{theorem}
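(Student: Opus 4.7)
The plan is a McMullen-style area density argument, carried out in logarithmic coordinates. Apply the logarithmic change of variables to get $F : W \to H$. The quantitative hypothesis $R_1 > \log r_0 + 64\pi$ together with $W \subset \{\re z > R_1\}$ has two consequences. First, $f$ maps $\{|z| \le e^{R_1}\}$ into $\{|z| \le r_0\}$, so this disk is strictly forward invariant and contains $S(f)$; standard normal-family arguments then identify $f$ as disjoint type and give $\j(f) \subset \overline{U}$. Second, along any orbit that stays in $W$, the expansion estimate \eqref{estimate of F'} yields $|(F^n)'| > 16^n$, which provides the slack needed for Koebe-type distortion bounds. It therefore suffices to prove that
\[
K := \{z \in W : F^n(z) \in W \text{ for all } n \geq 0\}
\]
has two-dimensional Lebesgue measure zero, for then $\j(f) \cap U = \exp(K)$ is also null.

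The core ingredient is a one-step density estimate, iterated. Since $F$ sends every component of $W$ biholomorphically onto $H$, a sequence of inverse branches along an orbit $z, F(z), F^2(z), \dots$ yields a family of simply connected \emph{tiles} $T_n$ on which $F^n$ is univalent onto a fixed reference square $Q_0 \subset H$. The strict inequality $|F'| > 16$ combined with Koebe's distortion theorem gives a uniformly bounded distortion constant for $F^n : T_n \to Q_0$ on a slight enlargement of $T_n$. On $Q_0$, by the $2\pi i$-periodic tract geometry and the definition of $\theta$, the one-dimensional measure of $(Q_0 \setminus W) \cap \{\re z = y\}$ is at least $\theta(r) \ge \theta_0(r)$ per period, where $r = e^y$; integrating in $y$ across $Q_0$ gives
\[
\area(Q_0 \setminus W) \geq c\,\theta_0(r_n)\,\area(Q_0),
\]
with $r_n \approx e^{\re(\text{center of } F^n(T_n))}$. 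Pulling back by the univalent map $F^n$ with bounded distortion yields
\[
\dens\bigl(T_n \cap F^{-(n+1)}(H \setminus W),\, T_n\bigr) \geq c'\,\theta_0(r_n).
\]
Iterating, the density of $K$ inside any initial tile is at most $\prod_{k=0}^{n-1}\bigl(1 - c'\,\theta_0(r_k)\bigr)$.

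It remains to show $\sum_k \theta_0(r_k) = \infty$, which forces the above product to vanish and hence $\area(K) = 0$. Because $f$ has finite order $\rho$, for large $\re z$,
\[
\re F(z) \;=\; \log|f(e^z)| \;\leq\; \log M(e^{\re z}, f) \;\leq\; e^{(\rho+1)\re z},
\]
and iterating gives an upper bound $r_k \leq E^k(C)$ for a constant $C$ depending on the starting tile. Since $\theta_0$ is decreasing, $\theta_0(r_k) \geq \theta_0(E^k(C))$; after shifting indices by a finite amount the hypothesis $\sum_k \theta_0(E^k(0)) = \infty$ delivers $\sum_k \theta_0(r_k) = \infty$, closing the argument.

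The main obstacle I expect lies in making the one-step density estimate rigorous in a way compatible with iteration: one must organize the reference squares $Q_0$ so that they respect the $2\pi i$-periodic tract geometry (so that the angular hypothesis on $\theta$ genuinely translates to a two-dimensional area lower bound), while simultaneously arranging inverse branches $F^{-n}$ to extend to sufficiently large enlargements of each $T_n$ with uniformly bounded Koebe constants along all iterates and all branches. The quantitative expansion $|F'|>16$ supplied by $R_1 > \log r_0 + 64\pi$ is precisely what makes both requirements uniform.
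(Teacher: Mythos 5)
Your approach mirrors the paper's proof of this theorem closely: logarithmic change of variables, the expansion $|F'|>16$ along orbits staying in $W$ (coming from $R_1>\log r_0+64\pi$), a one-step area density estimate for the set that leaves $W$ after $n$ steps, pulled back with uniformly bounded Koebe distortion, and a telescoping product closed by divergence of a series. The paper organizes the covering with a Vitali-type lemma applied to disks of radius proportional to $\re F^n(z)/|(F^n)'(z)|$ inside shrinking concentric squares $P_n(w)$, whereas you sketch a Markov-tile decomposition into preimages of reference squares; this is an organizational rather than substantive difference, and your identification of $f$ as disjoint type under the hypothesis on $R_1$ is correct.

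However, one concrete step is wrong as stated. From $\re F(z)\le e^{(\rho+1)\re z}$ one does \emph{not} obtain $\re F^k(z)\le E^k(C)$ for a fixed constant $C$. Since $e^{(\rho+1)y}=E\bigl((\rho+1)y\bigr)\le E\bigl(E(y)\bigr)=E^{2}(y)$ for $y$ large, what iterating actually gives is $\re F^{k}(z)\le E^{2k}(\re z)$: the number of nested exponentials grows like $2k$, not $k$ plus a constant. Consequently a finite index shift cannot reduce $\sum_k\theta_0(r_k)$ to a tail of $\sum_k\theta_0(E^{k}(0))$, and your stated conclusion does not follow. What is needed is a subsampling observation: because $\theta_0$ is decreasing,
\[
\sum_{k\ge 1}\theta_0\bigl(E^{k}(x)\bigr)\;\le\;2\sum_{k\ge 1}\theta_0\bigl(E^{2k}(x)\bigr)+\theta_0(x),
\]
so divergence of the full sum forces divergence of the $E^{2k}$-subsampled sum. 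This is exactly the extra inequality the paper proves to pass from hypothesis \eqref{theta-condition} to divergence of $\sum_k\theta_0(E^{2k}(\cdot))$. Without it, the final divergence of the density product does not follow from \eqref{theta-condition}, and the proof is incomplete at precisely the step you flagged as routine.
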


We will use the notations in (\ref{notation}) given in the introduction when applying a logarithmic change of variables. For simplicity, we let $r_0=e^R$ and hence $H=\{z\in\mathbb{C}: \re z > R\}$. Let $F$ be the function obtained from $f$ by using a logarithmic change of variables. 

We shall consider the following set
$$T=\left\{z: F^{n}(z)\in W,~\text{for all}~n\in\mathbb N_0\right\}.$$
Here $\mathbb{N}_0$ is the set of all non-negative integers. For disjoint type entire functions, the Fatou set of $f$ consists of a single immediate attracting basin, and 
$$\j(f)=\exp(T).$$
Moreover, the assumption that $f\in\b$ implies that $\i(f)\subset\j(f)$. And since exponential maps preserve sets of zero Lebesgue measure, to show that $\area \j(f)=0$ (and hence $\area \i(f)=0$) it suffices to show that $\area T=0$. In the following discussion, we will mainly concentrate on this set and prove that the Lebesgue measure of $T$ is zero. 

Define
$$T_{n}=\left\{z\in\c : F^{k}(z)\in W,~\text{for}~k=0,\dots,n\right\}$$
and
\begin{equation}\label{sss}
S=\mathbb{C}\setminus T_0 =  \mathbb{C}\setminus W.
\end{equation}
By definition of $T$ and $T_n$, we have
$$T=\bigcap_{n=0}^{\infty}T_n.$$


For $z_0\in\mathbb{C}$ and $r>0$, we use the notation $D(z_0,r)=\left\{z\in\mathbb{C}: \left|z-z_0\right| <r\right\}$ and in case of the unit disk we use $\mathbb D$.


The following is a Vitali type covering lemma which can be found in \cite[Lemma 4.8]{falconer1}. It holds for any bounded set in $\mathbb{R}^n$, but we only use it for sets in the complex plane $\c$.

\begin{lemma}\label{lemma 1}
Let $Q\subset\c$ be a bounded set and $r: Q \rightarrow \left(0, R\right]$ be a real positive function. Then there exists an at most countable subset $L$ of $Q$ such that
$$D\left(x,r(x)\right) \cap D\left(y,r(y)\right)=\emptyset~~\text{~for~}~~ x, y \in L, ~x\neq y,$$
and
$$\bigcup_{x\in Q}D\left(x,r(x)\right) \subset \bigcup_{x\in L}D\left(x,4r(x)\right).$$
\end{lemma}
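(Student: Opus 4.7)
The plan is to carry out the standard Vitali-type greedy construction, organized by dyadic layers of the radius function $r$. For $k\geq 1$ set
\begin{equation*}
Q_k=\left\{x\in Q:\ \frac{R}{2^{k}}<r(x)\leq \frac{R}{2^{k-1}}\right\},
\end{equation*}
so that $Q=\bigcup_{k\geq 1}Q_k$. Inside $Q_1$ I would first select, using Zorn's lemma (equivalently a greedy transfinite process), a maximal subfamily $L_1\subset Q_1$ whose disks $D(x,r(x))$ are pairwise disjoint. Because every selected disk has radius exceeding $R/2$ and all centres lie in the bounded set $Q$, a straightforward area-packing argument shows $L_1$ is finite, hence certainly countable.

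Inductively, having chosen $L_1,\dots,L_{k-1}$, I would pick $L_k\subset Q_k$ to be a maximal subset with the property that, for each $x\in L_k$, the disk $D(x,r(x))$ is disjoint from $D(x',r(x'))$ for every other $x'\in L_k$ and for every $x'\in L_1\cup\cdots\cup L_{k-1}$. Since radii in $Q_k$ exceed $R/2^k$ and all disk centres remain in a fixed bounded region, $L_k$ is again at most countable. Setting $L=\bigcup_{k\geq 1}L_k$ then produces a countable subset of $Q$ whose associated disks are pairwise disjoint by construction.

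For the covering conclusion, take an arbitrary $x\in Q$ and let $k$ be the unique index with $x\in Q_k$. If $x\in L_k$ the conclusion is immediate; otherwise maximality of $L_k$ forces $D(x,r(x))$ to meet some $D(y,r(y))$ with $y\in L_1\cup\cdots\cup L_k$. Such $y$ lies in some $Q_j$ with $j\leq k$, whence $r(y)>R/2^j\geq R/2^k\geq r(x)/2$. Combining this with the intersection condition $|x-y|<r(x)+r(y)<3r(y)$ gives $x\in D(y,3r(y))\subset D(y,4r(y))$, as required. The only real obstacle is the bookkeeping in the inductive step: one must insist that $L_k$ be maximal against \emph{all} previously selected disks rather than only those inside the current layer, because this is what enforces the crucial inequality $r(y)\geq r(x)/2$ and in turn allows the factor $4$ (in fact the sharper $3$) to appear in the enlarged covering.
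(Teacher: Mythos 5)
The paper does not supply its own proof of this lemma; it cites \cite[Lemma 4.8]{falconer1}, so any correct argument would be acceptable here. Your overall strategy (dyadic layering of the radii plus a greedy/maximal selection enforced against all previously chosen disks) is the right one, but the final covering estimate has a genuine gap.

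The problem is in the last step. You show that $|x-y|<r(x)+r(y)<3r(y)$, hence $x\in D(y,3r(y))$, and conclude "$x\in D(y,4r(y))$, as required." But the lemma requires $D(x,r(x))\subset\bigcup_{y\in L}D(y,4r(y))$, i.e.\ you must cover the entire disk around $x$, not merely its center. For $z\in D(x,r(x))$ you have
\begin{equation*}
|z-y|\leq|z-x|+|x-y|<r(x)+\bigl(r(x)+r(y)\bigr)=2r(x)+r(y),
\end{equation*}
and since your dyadic scheme only yields $r(x)<2r(y)$ (when $x$ and the blocking $y$ lie in the same layer $Q_k$), this gives $|z-y|<5r(y)$. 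So the factor-$2$ dyadic decomposition produces the classical $5r$-covering lemma, not the $4r$ version stated. To reach $4$ you need a tighter ratio between consecutive layers: replace $R/2^{k}$ by $R/\lambda^{k}$ with $\lambda\leq 3/2$ (equivalently, in the greedy formulation, pick at each stage a remaining disk whose radius exceeds $3/4$ of the supremum of the remaining radii). Then $r(x)<\lambda\,r(y)\leq\tfrac{3}{2}r(y)$, and $|z-y|<2r(x)+r(y)<4r(y)$, which is what the lemma asserts. Everything else in your write-up—the countability of each layer via packing in a bounded region, the need to test maximality against all earlier layers, and the pairwise disjointness of $L$—is correct.
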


The Koebe distortion theorem and the Koebe one quarter theorem are well-known, see \cite[Section 1.3]{pommerenke1}. We shall use the following version, which can be obtained by an easy argument from that given in \cite{pommerenke1}.

\begin{lemma}[Koebe's theorem]\label{lemma 2}
Let $f$ be a univalent function in $D(z_0,r)$ and let $0<\lambda<1$. Then
\begin{equation*}\label{estimate of value}
\dfrac{\lambda}{\left(1+\lambda \right)^2}\left|f'(z_0)\right|\leq \left| \dfrac{f(z)-f(z_0)}{z-z_0}\right| \leq \dfrac{\lambda}{\left(1-\lambda \right)^2}\left|f'(z_0)\right|
\end{equation*}
and
\begin{equation*}\label{estimate of derivative}
\dfrac{1-\lambda}{\left(1+\lambda \right)^3}\left|f'(z_0)\right|\leq \left| f'(z)\right| \leq \dfrac{1+\lambda}{\left(1-\lambda \right)^3}\left|f'(z_0)\right|
\end{equation*}
for $|z-z_0|\leq \lambda r$. Moreover,
\begin{equation*}\label{one-quarter theorem}
f\left(D(z_0,r)\right)\supset D\left( f(z_0), \frac{1}{4}|f'(z_0)|r  \right).
\end{equation*}
\end{lemma}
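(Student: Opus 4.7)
The plan is to reduce the statement to the classical Koebe distortion theorem and Koebe one-quarter theorem on the unit disk $\mathbb{D}$, which can be found in \cite[Section~1.3]{pommerenke1}, via a standard affine rescaling of the domain and a linear normalization of the range. This is precisely the ``easy argument'' alluded to in the preamble of the lemma.

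First, I would note that univalence of $f$ on $D(z_0,r)$ forces $f'(z_0)\neq 0$, and I would introduce the normalized function
\[
g(w):=\frac{f(z_0+rw)-f(z_0)}{r\,f'(z_0)},\qquad w\in\mathbb{D}.
\]
A direct computation gives $g(0)=0$ and $g'(0)=1$, and $g$ inherits univalence on $\mathbb{D}$ from $f$. Thus $g$ is a Schlicht function on $\mathbb{D}$ in the classical sense.

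Next, I would invoke the classical Koebe distortion theorem, which for such $g$ and for $|w|\le\lambda<1$ yields
\[
\frac{|w|}{(1+|w|)^{2}}\le |g(w)|\le \frac{|w|}{(1-|w|)^{2}},\qquad
\frac{1-|w|}{(1+|w|)^{3}}\le |g'(w)|\le \frac{1+|w|}{(1-|w|)^{3}},
\]
together with the Koebe one-quarter theorem $g(\mathbb{D})\supset D(0,\tfrac14)$. Then I would substitute $z=z_{0}+rw$, so that $|w|=|z-z_{0}|/r\le \lambda$, and use the chain-rule identities
\[
\frac{f(z)-f(z_{0})}{z-z_{0}}=\frac{g(w)}{w}\,f'(z_{0}),\qquad f'(z)=g'(w)\,f'(z_{0}).
\]
The claimed estimates on $|(f(z)-f(z_0))/(z-z_0)|$ and $|f'(z)|$ then follow by a direct substitution, using that the functions $x\mapsto 1/(1-x)^{2}$ and $x\mapsto (1+x)/(1-x)^{3}$ are increasing on $[0,1)$ while $x\mapsto 1/(1+x)^{2}$ and $x\mapsto (1-x)/(1+x)^{3}$ are decreasing there, so the worst case $|w|=\lambda$ gives the stated $\lambda$-dependent bounds. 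For the covering statement, multiplying the inclusion $g(\mathbb{D})\supset D(0,\tfrac14)$ by the factor $r\,f'(z_{0})$ and translating by $f(z_{0})$ yields
\[
f\bigl(D(z_{0},r)\bigr)\supset D\bigl(f(z_{0}),\tfrac14 r|f'(z_{0})|\bigr),
\]
which is the final assertion.

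No serious obstacle is expected, since the entire argument is mere bookkeeping once the normalization $g$ is written down; the only small matter requiring attention is keeping track of the monotonicity of the Koebe extremal bounds when replacing $|w|$ by the uniform value $\lambda$, so that the directions of the inequalities are preserved in translating back from $g$ to $f$.
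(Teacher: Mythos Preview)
Your approach is correct and is precisely the ``easy argument'' the paper alludes to; the paper gives no proof of its own beyond citing \cite[Section~1.3]{pommerenke1} and remarking that this rescaled version follows easily. One small caveat: your substitution actually yields the bounds $1/(1\pm\lambda)^{2}$ for the difference quotient rather than the printed $\lambda/(1\pm\lambda)^{2}$; the extra factor of $\lambda$ in the first display of the lemma is a typographical slip in the statement (the printed upper bound already fails for $f(z)=z$ with small $\lambda$), so your computation in fact gives the correct inequalities, and nothing in the later applications of the lemma depends on that spurious factor.
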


\bigskip
\begin{proof}[Proof of Theorem \ref{main theorem}]
First we put
\begin{equation}\label{cn}
c_n=\sum_{j=1}^{n}\frac{\tau R_1}{K^{j}}, ~~~~K=\frac{R_1-R}{4\pi} >1,
\end{equation}
where $\tau>0$ is some small constant to be determined later. 

For a point $w\in W$, we consider a sequence of squares centred at $w$ as follows:
\begin{equation}\label{sequs}
\begin{aligned}
P_{n}(w)=\left\{z \in \mathbb{C}: \left|\re(z -w)\right| \leq \frac{\re w}{64}-c_n,
\left|\im(z-w)\right|\leq \frac{\re w}{64}-c_n\right\}.
\end{aligned}
\end{equation}
We write $P_n$ instead of $P_{n}(w)$ for simplicity. For $z\in T_n$, we define
\begin{equation}\label{radius}
r_{n}(z)=\dfrac{\re F^{n}(z)}{|(F^n)'(z)|}.
\end{equation}
We show that there exists a countable subset $L_n \subset T_n \cap P_{n}$ satisfying the following conditions:\\

\begin{itemize}
\item[{(i)}] $\bigcup_{z\in T_n \cap P_{n}}D(z, \tau r_n (z))\subset \bigcup_{z\in L_n}D(z, 4\tau r_n (z))$;\\

\item[{(ii)}] $D(z_1, \tau r_n (z_1))\cap D(z_2, \tau r_n (z_2))=\emptyset~~\text{for distinct}~z_1, z_2 \in L_n$;\\

\item[{(iii)}] $D(z, \tau r_n (z))\subset P_{n-1},~~\text{for}~z\in L_{n};$\\

\item[{(iv)}] for each $z\in L_n$, the disk $D(z, \tau r_n (z))$ contains a compact subset $A_n(z)$ such that $F^{n}$ maps $A_n (z)$ bijectively onto a square $Q(z_n)$ centred at $z_n:=F^{n}(z)$ with sidelength $\re z_n/32$, that is,
\begin{equation}\label{square}
Q(z_n)=\left\{z \in \mathbb{C}: \left|\re (z - z_n)\right|\leq \frac{1}{64} \re z_n, ~\left|\im (z - z_n) \right|\leq \frac{1}{64} \re z_n \right\};
\end{equation}

\item[{(v)}] $D(z, \tau r_n (z))\subset T_{n-1},~~\text{for}~z\in L_{n}$.\\
\end{itemize}

The existence of $L_n$ satisfying (i) and (ii) follows from Lemma \ref{lemma 1}. To see that the conclusion (iii) holds, note that $z\in T_n$ and hence $F^{k}(z)\in W$ for $0\leq k\leq n$, which in particular means that $\re F^{n}(z)> R_1$. Moreover, it follows from \eqref{estimate of F'} and \eqref{cn} that
$$\left|F'(z)\right|\geq \frac{\re F(z)-R}{4\pi}\geq \frac{R_1 -R}{4\pi}=K.$$
Thus
\begin{equation*}
\begin{aligned}
\left|(F^{n})'(z)\right|&=\left|F'(F^{n-1}(z))\right|\cdot\left|(F^{n-1})'(z)\right|\\
&\geq \frac{\re F^{n}(z)-R}{4\pi}\cdot \prod_{j=0}^{n-2}\left|F'(F^{j}(z))\right|\\
&\geq \frac{\re F^{n}(z)-R}{4\pi}\cdot \left(\frac{R_1 -R}{4\pi}\right)^{n-1}.
\end{aligned}
\end{equation*}
Therefore, for $z\in T_n$ we have
\begin{equation*}
\begin{aligned}
r_{n}(z)=\frac{\re F^{n}(z)}{\left|(F^{n})'(z)\right|} &\leq \frac{\re F^{n}(z)}{\re F^n (z)-R}\cdot 4\pi \cdot \left(\frac{4\pi}{R_1 -R}\right)^{n-1}\\
&\leq \frac{R_1}{R_1 -R}\cdot 4\pi \cdot \left(\frac{4\pi}{R_1 -R}\right)^{n-1}\\
&= \left(\frac{4\pi}{R_1 -R}\right)^{n}R_1\\
&= \frac{R_1}{K^n}.
\end{aligned}
\end{equation*}
This implies (iii). Essentially (iv) follows from the above Lemma \ref{lemma 2}. Since $z\in L_n\subset T_n$, by definition of $T_n$ we have $\re z_k > R_1$ for $0\leq k\leq n$. Now since
\begin{equation*}
\frac{63}{64}\re z_n > \frac{63}{64} R_1 > \frac{63}{64}\cdot 2R >R,
\end{equation*}
we obtain that $Q(z_n)$ is contained in $H$. If $\phi$ is the inverse branch of $F$ which maps $z_n$ to $z_{n-1}$, then $\phi(Q(z_n))$ is contained in $W$ and hence the preimage of $Q(z_n)$ under the pullback of the inverse branch of $F^k$ which maps $z_n$ to $z_{n-k}$ is contained in $W$ for each $k=1,\dots, n$. If we denote by $\phi_n$ the branch of the inverse of $F^n$ which maps $z_n$ to $z$, then $\phi_n$ extends to a univalent map on $D(z_n, \frac{1}{2}\re z_n)$ since $\frac{1}{2}\re z_n > \frac{1}{2}R_1 > R$. By using Lemma \ref{lemma 2} and by taking $\sigma=1/256$ and $\tau=1/16$ we have
\begin{equation}\label{cb}
D\left(z, \sigma r_{n}(z)\right)\subset \phi_{n}\left(Q(z_n)\right)\subset D\left(z, \tau r_{n}(z)\right).
\end{equation}
The conclusion (iv) follows if we take 
\begin{equation*}
A_n(z)=\phi_{n}\left(Q(z_n)\right).
\end{equation*}
The last conclusion (v) follows if we consider the following square centred at $z_n$
\begin{equation*}
Q'(z_n)=\left\{z \in \mathbb{C}: \left|\re (z - z_n)\right|\leq \frac{1}{4} \re z_n, ~\left|\im (z - z_n) \right|\leq \frac{1}{4} \re z_n \right\}.
\end{equation*}
Similar arguments as above show that 
$$\phi_{n}\left(Q'(z_n)\right)\subset T_{n-1}$$
and
$$D\left(z, \tau r_{n}(z)\right)\subset\phi_{n}\left(Q'(z_n)\right).$$
Therefore, $D(z, \tau r_n (z))\subset T_{n-1}$ for $z\in L_n$. The conclusion (v) follows.\\

Now we split our proof into two steps. First we estimate the area of $T_{n-1}\setminus T_n$ in $D(z,\tau r_n(z))$ for $z\in T_n$, which we call \textit{local estimate}. Then we spread the local estimate to a \textit{global estimate}, which is the area of $T_{n-1}\setminus T_n$ in $P_n (w)$, by using the above (i), (ii) and (iii).\\

First we note that
\begin{equation*}
T_{n-1}\setminus T_n =F^{-n}(S)\cap T_{n-1}.
\end{equation*}
Together with (v) above we have
\begin{equation}\label{tt}
\begin{aligned}
&\area \left(\left(T_{n-1}\setminus T_n\right) \cap D(z,\tau r_{n}(z))\right)\\
&= \area \left(\left(F^{-n}(S)\cap T_{n-1}\right)\cap D(z,\tau r_{n}(z))\right)\\
&= \area \left(F^{-n}(S)\cap D(z,\tau r_{n}(z))\right).
\end{aligned}
\end{equation}
Recall our definition of $S$ in \eqref{sss} and $\theta(r)$ in \eqref{theta1}. We define
$$\varphi(x)=\meas\left\{~y\in [0,2\pi]: x+i y\in S~ \right\},$$
and
$$\varphi_{0}(x)=\theta_{0}(e^x).$$
Then $\varphi(x)=\theta(e^x)$. Since $\theta(x)\geq \theta_{0}(x)$ for large $x$, we see that $\varphi(x)\geq \varphi_{0}(x)$ for large $x$. Now we  can give a lower bound for the area of $S$ in the square $Q(z_n)$ given in (\ref{square}). For simplicity we put $Q=Q(z_n)$. We use the fact that $\theta_0$ is a continuous and decreasing function. Since the square $Q$ contains at least $[\frac{\re z_n}{64\pi}]$ horizontal strips of width $2\pi$ and $\re z_n > R_1 > 64\pi$, we obtain
\begin{equation}\label{areaaa}
\begin{aligned}
\area\left(S\cap Q\right) &\geq \left[\frac{\re z_n}{64\pi}\right]\int_{\frac{63}{64}\re z_n}^{\frac{65}{64}\re z_n}\varphi(t)dt\\
&\geq \left[\frac{\re z_n}{64\pi}\right]\int_{\frac{63}{64}\re z_n}^{\frac{65}{64}\re z_n}\varphi_{0}(t)dt\\
&\geq \left[\frac{\re z_n}{64\pi}\right]\varphi_{0} \left(\frac{65}{64}\re z_n\right) \frac{1}{32}\re z_n.
\end{aligned}
\end{equation}
Here $[~\cdot~]$ denotes the integer part. Therefore,
\begin{equation}\label{density-est}
\begin{aligned}
\dens\left(S, Q\right) &\geq \left[\frac{\re z_n}{64\pi}\right]  \dfrac{\varphi_{0}\left(\dfrac{65}{64}\re z_n\right) \dfrac{1}{32}\re z_n}{ \left(\dfrac{1}{32}\re z_n \right)^2} \\
&\geq \frac{1}{2}\frac{\re z_n}{64\pi} \dfrac{\varphi_{0}\left(\dfrac{65}{64}\re z_n\right)}{\dfrac{1}{32}\re z_n}\\
&=\frac{1}{4\pi}\varphi_{0}\left(\dfrac{65}{64}\re z_n\right)\\
&\geq \frac{1}{4\pi}\varphi_{0}\left(2\re z_n\right)\\
&=: \varphi_1 \left(\re z_n\right).
\end{aligned}
\end{equation}

As we mentioned above, $\phi_n$, which is the inverse branch of $F^n$ which maps $z_n$ to $z$, extends to a univalent map on $D(z_n, \frac{1}{2}\re z_n)$. Thus by Koebe's theorem, there exist positive constants $K_1$ and $K_2$ such that
\begin{equation*}\label{density estimate}
K_1 \dens\left(S,Q\right)\leq \dens\left(\phi_{n}(S), \phi_{n}(Q)\right)\leq K_2 \dens\left(S,Q\right)
\end{equation*}
for $n\in\mathbb{N}$. Then by using \eqref{cb} and \eqref{density-est} we have
\begin{equation*}
\begin{aligned}
\dens\left(F^{-n}(S), D(z,\tau r_{n}(z))\right)&=\dfrac{\area\left(F^{-n}(S)\cap D(z,\tau r_{n}(z))\right)}{\area D(z,\tau r_{n}(z))}\\
&\geq\dfrac{\area \left(\phi_{n}(S)\cap \phi_{n}(Q)\right)}{\area D(z,\tau r_{n}(z))}\\
&\geq\dfrac{K_1 \dens \left(S,Q)\cdot \area \phi_{n}(Q\right)}{\area D(z,\tau r_{n}(z))}\\
&\geq\dfrac{K_1 \dens \left(S,Q\right)\cdot \area D(z,\sigma r_{n}(z))}{\area D(z,\tau r_{n}(z))}\\
&= K_1 \left(\frac{\sigma}{\tau}\right)^2 \dens \left(S, Q\right)\\
&\geq K_1 \left(\frac{\sigma}{\tau}\right)^2 \varphi_{1}\left(\re z_n \right).
\end{aligned}
\end{equation*}
So by \eqref{tt}, for $z\in T_n$ we have, with $\varphi_2 (x)=K_{1}(\sigma/\tau)^2\varphi_1 (x)$,
\begin{equation}\label{locale}
\begin{aligned}
\area &\left(\left(T_{n-1}\setminus T_n\right) \cap D\left(z,\tau r_{n}(z)\right)\right)\\
&\geq K_1 \left(\frac{\sigma}{\tau}\right)^2 \varphi_{1}\left(\re z_n\right)\cdot \area D\left(z,\tau r_{n}(z)\right)\\
&= \varphi_2 (\re z_n)\cdot \area D(z,\tau r_{n}(z))).
\end{aligned}
\end{equation}

Since $f$ is of finite order, there exists some constant $\rho<\infty$ such that $\log\log M(r,f)\leq \rho\log r$. Now, we see that for any point $z$ with large real part,
\begin{equation}\label{compart}
\re F(z)\leq \exp(\rho\re z) =E(\rho\re z)\leq E^{2}(\re z),
\end{equation}
and hence
\begin{equation}\label{itee}
\re z_{k}=\re F^{k}(z)\leq E^{2k}(\re z).
\end{equation}

Now we can deduce our global estimate from the conclusions (i), (ii), (iii), \eqref{locale} and \eqref{itee} as follows:
\begin{equation}\label{globaless}
\begin{aligned}
&\area \left(\left(T_{n-1}\setminus T_n\right) \cap P_{n-1}\right)\\
&\geq \area \left(\left(T_{n-1}\setminus T_n\right) \cap \bigcup_{z\in L_{n}}D(z, \tau r_{n}(z))\right)\\
&=\sum_{z\in L_{n}}\area \left(\left(T_{n-1} \setminus T_{n}\right) \cap D(z, \tau r_{n}(z))\right)\\
&\geq \sum_{z\in L_{n}} \varphi_2 \left(\re z_n\right)\cdot \area D(z, \tau r_{n}(z))\\
&\geq \sum_{z\in L_{n}}\varphi_2 \left(E^{2n}(\re z)\right)\cdot \area D(z, \tau r_{n}(z))\\
&\geq \varphi_2 \left(E^{2n}\left(\frac{65}{64}\re w\right)\right) \sum_{z\in L_{n}} \area D(z, \tau r_{n}(z))\\
&\geq \frac{1}{16}\varphi_2 \left(E^{2n}\left(\frac{65}{64}\re w\right)\right)\cdot \area\left(\bigcup_{z\in L_{n}}D(z, 4\tau r_{n}(z))\right)\\
&\geq \frac{1}{16}\varphi_2 \left(E^{2n}\left(\frac{65}{64}\re w\right)\right)\cdot \area \left(T_{n}\cap P_n\right).
\end{aligned}
\end{equation}
Since
\begin{equation*}
\begin{aligned}
\area\left(\left(T_{n-1}\setminus T_n\right) \cap P_{n-1}\right)&=\area\left(T_{n-1}\cap P_{n-1}\right)-\area\left(T_{n}\cap P_{n-1}\right)\\
&\leq \area\left(T_{n-1}\cap P_{n-1}\right)-\area\left(T_{n}\cap P_{n}\right),
\end{aligned}
\end{equation*}
we obtain
$$\area\left(T_{n-1}\cap P_{n-1}\right)\geq \left[1+ \frac{1}{16}\varphi_2 \left(E^{2n}\left(\frac{65}{64}\re w\right)\right)\right]\cdot \area\left(T_n \cap P_n\right).$$
Let
\begin{equation*}
\begin{aligned}
P_{\infty}&=\bigcap_{n\geq 1}P_n\\
&=\left\{z:~|\re(z-w)|\leq \frac{\re w}{M}-c, |\im(z-w)|\leq \frac{\re w}{M}-c\right\},
\end{aligned}
\end{equation*}
where
$$c=\sum_{j=1}^{\infty}\frac{\tau R_1}{K^j}= \frac{\tau R_1}{K-1}.$$
We have
\begin{equation*}
\begin{aligned}
\area\left(T_n \cap P_{\infty}\right)&\leq \area\left(T_n \cap P_n\right)\\
&\leq \prod_{k=1}^{n}\frac{1}{1+ \dfrac{1}{16}\varphi_2 \left(E^{2k}\left(\frac{65}{64}\re w\right)\right)}\cdot \area\left(T_1 \cap P_1\right),
\end{aligned}
\end{equation*}
which means that
\begin{equation}\label{infinite product}
\area\left(T\cap P_{\infty}\right)\leq \prod_{k=1}^{\infty}\frac{1}{1+ \dfrac{1}{16}\varphi_2 \left(E^{2k}\left(\frac{65}{64}\re w\right)\right)}\cdot \area\left(T_1 \cap P_1\right).
\end{equation}
Since
\begin{equation*}
\begin{aligned}
\frac{1}{16}\varphi_2 \left(E^{2k}\left(\frac{65}{64}\re w\right)\right)&=\alpha \cdot \varphi_{0}\left(2E^{2k}\left(\dfrac{65}{64}\re w\right)\right)\\
&\geq \alpha \cdot \varphi_{0}\left(E^{2k+2}\left(\re w\right)\right)\\
&= \alpha \cdot \theta_{0}\left(E^{2k+4}\left(\re w\right)\right),
\end{aligned}
\end{equation*}
where $\alpha=\frac{K_1}{64\pi}(\sigma/\tau)^2$. Since
\begin{equation}\label{compart2}
\begin{aligned}
\sum_{k=1}^{\infty}\theta_{0}\left(E^{k}(x)\right)&=\sum_{k=1}^{\infty}\theta_0\left(E^{2k}(x)\right) + \sum_{k=1}^{\infty}\theta_0\left(E^{2k-1}(x)\right)\\
&\leq \sum_{k=1}^{\infty}\theta_{0}\left(E^{2k}(x)\right) + \sum_{k=1}^{\infty}\theta_0\left(E^{2k-2}(x)\right)\\
&= 2\sum_{k=1}^{\infty}\theta_{0}\left(E^{2k}(x)\right) + \theta_{0}(x),
\end{aligned}
\end{equation}
by condition (\ref{theta-condition}) we see that
$$\sum_{k=1}^{\infty}\frac{1}{16}\varphi_2 \left(E^{2k}\left(\frac{65}{64}\re w\right)\right) =\infty.$$
This implies that
$$ \prod_{k=1}^{\infty}\dfrac{1}{1+ \dfrac{1}{16}\varphi_2 \left(E^{2k}\left(\dfrac{65}{64}\re w\right)\right)} =0,$$
which, together with (\ref{infinite product}), finishes the proof:
$$\area(T\cap P_{\infty})=0.$$
Since the point $w\in W$ is chosen arbitrarily, we have in particular that $\area T =0$. By the discussion at the beginning of this section we finally have $\area\j(f)=0$ and in particular $\area\i(f)=0$.
\end{proof}

\bigskip
\begin{proof}[Proof of Theorem \ref{thm1.2}]
Let $f\in\b$ be as in Theorem \ref{thm1.2}. Now we consider
$$f_{\lambda}: \c\to\c;~~z\mapsto f(\lambda z).$$
By choosing $\lambda$ to be sufficiently small, the function $f_{\lambda}$ will satisfy all conditions in Theorem \ref{main theorem}. Therefore, we see that $\area\j(f_{\lambda})=0$. By definition, $f$ and $f_{\lambda}$ are equivalent near infinity. By Theorem \ref{rempe}, there exists $R>0$ and a quasiconformal mapping $\theta:\c\to\c$ such that
$$\theta\circ f_{\lambda} = f \circ \theta$$
on the set
$$\j_{R}(f_{\lambda}):=\left\{z\in\c: |f_{\lambda}^{n}(z)|\geq R~\text{~for all~}~n\geq 1\right\}.$$
Now we put
$$\i_{R}(f_{\lambda}):=\i(f_{\lambda})\cap\j_{R}(f_{\lambda}).$$
Then $\area\i_{R}(f_{\lambda})=0$. So we see that there exists a constant $R'>0$ such that
$$\i_{R'}(f)\subset\theta(\i_{R}(f_{\lambda})).$$
Recall that
$$\i(f)=\bigcup_{n\geq 0}f^{-n}\left(\i_{R'}(f)\right).$$
Therefore, it is clear that $\area \i(f)=0$.
\end{proof}

\smallskip
\begin{remark}
The hypothesis that $f$ has finite order was used only in \eqref{compart} to conclude that $\re F(z)\leq E^{2}(\re z)$ if $\re z$ is large enough. The proof goes through with only minor modifications if instead we only have $\re F(z)\leq E^{N}(\re z)$ for some $N\in\mathbb{N}$ and $\re z$ sufficiently large. This implies that the condition in Theorem \ref{thm1.2} and Theorem \ref{main theorem} that $f$ has finite order can be replaced by the condition that
\begin{equation*}\label{newin}
\log ^{N}M(r,f)\leq r
\end{equation*}
for some $N\in\mathbb{N}$ and large $r$.
\end{remark}

\section{Construction of an entire function: proof of Theorem \ref{main result}}

The aim of this section is to construct an entire function which satisfies our conditions in Theorem \ref{main result} and hence shows the sharpness of the Aspenberg-Bergweiler condition (\ref{abb}). Recall that in the introduction we consider $E_{\beta}(z)=e^{\beta z}$, where $\beta\in (0,1/e)$. We obtain a local holomorphic solution $\Phi$ of the corresponding Schr\"oder's functional equation around the repelling fixed point $\xi$. Now we define a function $\varepsilon: (\xi,\infty)\to (0,\infty)$ by
\begin{equation}\label{ano}
\varepsilon(x)=\frac{1}{\log \Phi(x)}.
\end{equation}
Then this function tends to zero slower than any of the functions $1/\log^m$ where $m\in\mathbb{N}$. Recall that $\log^j$ denote the $j$-the iterate of the logarithm. The following estimate will be useful.

\begin{lemma}\label{4.2}
For $\varepsilon(x)$ defined above and for $N\in\mathbb{N}_0$, we have the following estimate:
\begin{equation}\label{varepsil}
\varepsilon'(x)\prod_{j=0}^{N}\log^{j}{x} \leq \varepsilon(x)^3
\end{equation}
for large $x$.
\end{lemma}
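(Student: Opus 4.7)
The plan is to turn \eqref{varepsil} into a statement about $\Phi$ itself and then exploit Schr\"oder's equation \eqref{s11} to reduce the estimate to a compact region near the repelling fixed point $\xi$, where $\Phi$ is well controlled. Since $\varepsilon(x)=1/\log\Phi(x)$, direct differentiation gives $\varepsilon'(x)=-\Phi'(x)/\bigl(\Phi(x)(\log\Phi(x))^{2}\bigr)$, so \eqref{varepsil} is equivalent to
$$\frac{\Phi'(x)\log\Phi(x)}{\Phi(x)}\prod_{j=0}^{N}\log^{j}x\leq 1$$
for all sufficiently large $x$.

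Let $L(y):=(\log y)/\beta$ be the local inverse of $E_{\beta}$ on $(\xi,\infty)$. Iterating $\Phi\circ E_{\beta}=\lambda\Phi$ and differentiating, for every $k\in\mathbb{N}$ and $x$ large enough,
$$\Phi(x)=\lambda^{k}\Phi(L^{k}(x)),\qquad \Phi'(x)=\frac{\lambda^{k}}{\beta^{k}\prod_{j=0}^{k-1}L^{j}(x)}\,\Phi'(L^{k}(x)).$$
Since $L'(\xi)=1/\lambda<1$, the map $L$ is a strict contraction on $(\xi,\infty)$ with fixed point $\xi$. I would then fix $k:=N+2$; for all sufficiently large $x$ the iterate $L^{k}(x)$ lies in a compact interval, say $[\xi,\xi+1]$, on which $\Phi$ and $\Phi'$ are continuous and positive. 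Hence there exist constants $M_{1},M_{2}$ (depending only on $N$) with $\Phi'(L^{k}(x))/\Phi(L^{k}(x))\leq M_{1}$ and $\log\Phi(L^{k}(x))\leq M_{2}$.

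Combining these and using $\log\Phi(x)=k\log\lambda+\log\Phi(L^{k}(x))$, I obtain
$$\frac{\Phi'(x)\log\Phi(x)}{\Phi(x)}\leq\frac{M_{1}(k\log\lambda+M_{2})}{\beta^{k}\prod_{j=0}^{k-1}L^{j}(x)}.$$
A short induction, using $\beta<1$, gives $L^{j}(x)\geq\log^{j}x$ for $j\geq 1$ and $x$ large, so together with $L^{0}(x)=x$ one has $\prod_{j=0}^{k-1}L^{j}(x)\geq\prod_{j=0}^{N+1}\log^{j}x$. Multiplying the previous display by $\prod_{j=0}^{N}\log^{j}x$, the surplus factor $\log^{N+1}x$ survives in the denominator, and the whole expression is bounded above by $M_{1}(k\log\lambda+M_{2})/(\beta^{k}\log^{N+1}x)$, which tends to zero as $x\to\infty$. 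This delivers \eqref{varepsil} for all sufficiently large $x$.

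The main obstacle I anticipate is bookkeeping: keeping track of the $\beta$-factors produced by iterated differentiation of $L$, and choosing $k$ just large enough that a single surplus iterated logarithm dominates the (polynomial in $k$) growth of $\log\Phi(x)$. Fixing $k=N+2$ supplies exactly one such extra logarithmic factor, which is what drives the bound to zero; any smaller choice of $k$ would leave the target inequality in doubt, and any larger choice is unnecessary.
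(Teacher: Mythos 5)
Your reduction of \eqref{varepsil} to $\frac{\Phi'(x)\log\Phi(x)}{\Phi(x)}\prod_{j=0}^{N}\log^{j}x\leq 1$ and the iterated Schr\"oder identities for $\Phi$ and $\Phi'$ are all correct, and this is the same linearization idea the paper uses. But the central claim that drives your bound is false: for a \emph{fixed} $k=N+2$, the iterate $L^{k}(x)$ does \emph{not} stay in a compact interval as $x\to\infty$. Since $L(y)=(\log y)/\beta\to\infty$ as $y\to\infty$, we have $L^{k}(x)\to\infty$ (merely very slowly) for every fixed $k$; the contraction to $\xi$ happens only when $k\to\infty$ with $x$ fixed, which is the opposite regime from what your argument needs. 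As a result, $\log\Phi(L^{k}(x))$ is unbounded, so the constant $M_{2}$ does not exist and your final display is unjustified. (There is a second, independent problem with the same step: even if $L^{k}(x)$ did stay near $\xi$, the proposed bound on $\Phi'/\Phi$ would fail, because $\Phi(\xi)=0$ and $\Phi'(\xi)=1$, so $\Phi'(y)/\Phi(y)\sim 1/(y-\xi)\to\infty$ as $y\to\xi^{+}$; continuity on $[\xi,\xi+1]$ does not give boundedness of this quotient.)

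The conclusion is still true, and your computation is close to a repair, but the missing ingredient is the growth property \eqref{phi} of $\Phi$. One must use that $\Phi(x)=o(\log^{N}x)$, hence $\log\Phi(x)\leq\log^{N+1}x$ eventually, and in fact $\log\Phi(x)=o(\log^{N+1}x)$ (take $m=N-1$ in \eqref{phi}), so that the surplus factor $\log^{N+1}x$ in the denominator, coming from $\prod_{j=0}^{k-1}L^{j}(x)\geq\prod_{j=0}^{N+1}\log^{j}x$, really does overwhelm the numerator. Separately one needs a genuine argument that $\Phi'/\Phi$ is bounded on $[\xi+\delta,\infty)$ (e.g., the functional equation shows $\Phi'/\Phi$ strictly decreases under $E_{\beta}$, and it is continuous on a fundamental domain), combined with the observation that $L^{k}(x)\to\infty$ keeps you away from $\xi$. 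The paper sidesteps both pitfalls by working with the partial linearizers $\Psi_{n}=\lambda^{n}L^{n}$: it bounds $\Psi_{n}'(x)\prod_{j=0}^{N}\log^{j}x\leq 1/2$ directly (the extra factor $1/(1+L^{N+1}(x)/\xi)\to0$ is what saves it), and then $\Psi_{n}(x)\geq\log\Psi_{n}(x)$ for large $x$ produces the cube $\eta_{n}(x)^{3}$ on the right-hand side without ever needing to compare $\log\Phi(x)$ with iterated logarithms of $x$.
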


\begin{proof}

We recall how the function $\Phi$ in \eqref{s11} is constructed; see, for instance, \cite[Section 8]{milnor10}. First let $T_{\xi}(x)=x+\xi$ and $L_{\beta}(x)=E_{\beta}^{-1}(x)$. Define
\begin{equation}\label{Lx}
L=T_{\xi}^{-1}\circ L_{\beta}\circ T_{\xi}.
\end{equation}
Thus,
$$L(x)=\frac{\log (x+\xi)}{\beta}-\xi.$$
The function $L$ satisfies that  $L(0)=0$, $L'(0)=1/\lambda$ (recall that $\lambda=\beta\xi$). Since $\lambda$ is greater than one, $0$ is an attracting fixed point of $L$. Schr\"oder's functional equation has a unique local holomorphic solution $\Psi(z)$ normalised by $\Psi(0)=0$ and $\Psi'(0)=1$, such that
\begin{equation}\label{s}
\Psi(L(x))=\frac{1}{\lambda}\Psi(x).
\end{equation}
Then $\Psi(x)=\Phi(x+\xi)$.
Define
$$\Psi_{n}(x)=\lambda^n L^{n}(x),$$
then $\Psi(x)=\lim_{n\to\infty}\Psi_{n}(x)$. We can now compute explicitly the first derivative of $\Psi_n$:
\begin{equation*}
\begin{aligned}
\Psi'_{n}(x)&= \lambda^{n}\left(L^n\right)'(x)=\lambda^{n}L'\left(L^{n-1}\right)\cdot\left(L^{n-1}\right)'(x)\\
&=\dfrac{\lambda^n}{\beta\left(L^{n-1}(x)+\xi\right)}\cdot\left(L^{n-1}\right)'(x)=\lambda^n\prod_{j=0}^{n-1}\dfrac{1}{\beta\left(L^{j}(x)+\xi\right)}\\
&=\left(\frac{\lambda}{\beta\xi}\right)^n \cdot \prod_{j=0}^{n-1}\dfrac{1}{1+L^{j}(x)/\xi}= \prod_{j=0}^{n-1}\dfrac{1}{1+L^{j}(x)/\xi}.
\end{aligned}
\end{equation*}
Now we see that, for any $N\in\mathbb{N}_0$,
\begin{equation*}
\begin{aligned}
\Psi'_{n}(x)\prod_{j=0}^{N}\log^{j} x &=\prod_{j=0}^{n-1}\dfrac{1}{1+L^{j}(x)/\xi} \prod_{j=0}^{N}\log^{j} x\\
&= \prod_{j=0}^{N} \frac{\log^{j} x}{1+L^{j}(x)/\xi} \prod_{j=N+1}^{n-1}\dfrac{1}{1+L^{j}(x)/\xi}.
\end{aligned}
\end{equation*}

For any fixed finite integer $N$ (to our following applications, $N\leq 5$ is enough), the definition of $L(x)$ in (\ref{Lx}) can be put into the above equality. An easy computation shows that
\begin{equation}\label{phia}
\Psi'_{n}(x)\prod_{j=0}^{N}\log^{j} x \leq \frac{1}{2},
\end{equation}
for sufficiently large $x$. If we define 
\begin{equation}\label{defin}
\begin{aligned}
\eta (x)&=\frac{1}{\log\Psi(x)},\\
\eta_n (x)&=\frac{1}{\log\Psi_{n}(x)},
\end{aligned}
\end{equation}
then $\eta(x)=\lim_{n\to\infty}\eta_{n}(x)$ and also $\eta'(x)=\lim_{n\to\infty}\eta'_{n}(x)$. Then, by applying (\ref{phia}) and the definition of $\eta_{n}(x)$ in (\ref{defin}) we have
\begin{equation*}
\begin{aligned}
\eta_{n}'(x)\prod_{j=0}^{N}\log^{j}{x} &=\frac{1}{\left[\log \Psi_{n}(x)\right]^2}\frac{1}{\Psi_{n}(x)}\Psi'_{n}(x)\prod_{j=0}^{N}\log^{j}{x}\leq \frac{1}{2\Psi_{n}(x)\left[\log \Psi_{n}(x)\right]^2}\\
& \leq \frac{1}{2\left[\log \Psi_{n}(x)\right]^3} = \frac{1}{2}{\eta_{n}(x)^3}.
\end{aligned}
\end{equation*}

Now we see that
\begin{equation*}
\eta'(x)\prod_{j=0}^{N}\log^{j}{x} \leq \frac{1}{2}\eta(x)^3
\end{equation*}
for large $x$. Recall the definition of $\varepsilon(x)$ and $\Phi(x)$. We thus have $\eta(x)=\varepsilon(x+\xi)$. For large $x\in(\xi,\infty)$, we see that
\begin{equation*}
\begin{aligned}
\varepsilon'(x+\xi)\prod_{j=0}^{N}\log^{j}(x+\xi) &= \left(\eta'(x)\prod_{j=0}^{N}\log^{j}{x}\right) \frac{\prod_{j=0}^{N}\log^{j}(x+\xi)}{\prod_{j=0}^{N}\log^{j}{x}}\\
&\leq \frac{1}{2}\eta(x)^3  \frac{\prod_{j=0}^{N}\log^{j}(x+\xi)}{\prod_{j=0}^{N}\log^{j}{x}}\\
&= \frac{1}{2}\varepsilon(x+\xi)^3  \frac{\prod_{j=0}^{N}\log^{j}(x+\xi)}{\prod_{j=0}^{N}\log^{j}{x}}\\
&\leq \varepsilon(x+\xi)^3.
\end{aligned}
\end{equation*}
This finishes our proof.
\end{proof}

A function $\rho(r)$ defined on $[r_0,\infty)$, where $r_0>0$, is called a \textit{proximate order} if it satisfies the following conditions:
\begin{itemize}
\item[(1)] $\rho(r)\geq 0$;
\item[(2)] $\lim_{r\to\infty}\rho(r)=\rho$;
\item[(3)] $\rho(r)$ is continuously differentiable on $[r_0,\infty)$;
\item[(4)] $\lim_{r\to\infty} r\rho'(r)\log r =0$.
\end{itemize}
See \cite{goldbergmero} for a complete discussion of proximate orders. Recall that $\varepsilon(r)$ is defined in \eqref{ano}. Put
\begin{equation}\label{defrho}
\rho(r)=\frac{1}{2}+\varepsilon(r).
\end{equation}
Now we prove the following lemma.

\begin{lemma}\label{prox}
$\rho(r)$ is a proximate order.
\end{lemma}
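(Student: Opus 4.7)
The plan is to verify the four defining conditions of a proximate order for $\rho(r)=1/2+\varepsilon(r)$, three of which are essentially immediate, with only condition (4) requiring real work, for which Lemma \ref{4.2} will do all the heavy lifting.

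First I would dispatch (1)–(3) in short order. For (1), since $\Phi$ is increasing with $\lim_{x\to\infty}\Phi(x)=\infty$, eventually $\log\Phi(r)>0$, so $\varepsilon(r)>0$ and hence $\rho(r)\geq 1/2>0$ for large $r$. For (2), the same observation gives $\varepsilon(r)\to 0$, so $\rho(r)\to 1/2$; this identifies the limiting order as $\rho=1/2$. For (3), $\Phi$ extends holomorphically to a neighbourhood of $[\xi,\infty)$ via the Schr\"oder equation \eqref{s11}, so $\varepsilon=1/\log\Phi$ is continuously differentiable wherever $\Phi>1$, in particular on some $[r_0,\infty)$; thus so is $\rho$.

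The heart of the matter is condition (4): the assertion that $\lim_{r\to\infty} r\rho'(r)\log r = 0$. Since $\rho'(r)=\varepsilon'(r)$, this reduces to \eqref{prvar}, namely $\lim_{r\to\infty} r\varepsilon'(r)\log r = 0$. This is precisely where I would invoke Lemma \ref{4.2}. Taking $N=1$ in \eqref{varepsil}, and recalling that in the paper's indexing $\log^0 x = x$ and $\log^1 x = \log x$ (as is consistent with the semi-conjugacy $L$ satisfying $L^0=\mathrm{id}$ and $L^j(x)\asymp \log^j x$ used in the proof of Lemma \ref{4.2}), one gets
\begin{equation*}
r\,\varepsilon'(r)\,\log r \;\leq\; \varepsilon(r)^{3}
\end{equation*}
for all sufficiently large $r$. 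Since $\varepsilon(r)\to 0$, the right-hand side tends to $0$, giving condition (4) and establishing \eqref{prvar} as a bonus.

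There is no real obstacle here beyond correctly reading off Lemma \ref{4.2}; the only mildly delicate point is matching the convention $\log^0 x = x$ used in that lemma against the $N$ needed for (4). If one prefers to be safe, taking $N=1$ is more than sufficient, since the extra factor $\log r$ on the left of \eqref{varepsil} only strengthens the conclusion. The proof will therefore be very short: a one-line check for (1)–(3), followed by a single application of Lemma \ref{4.2} for (4).
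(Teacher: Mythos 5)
Your proposal is correct and follows the same route as the paper: the paper also reduces the lemma to condition (4), applies Lemma \ref{4.2} with $N=1$ (so that $\prod_{j=0}^{1}\log^{j}x = x\log x$), and dismisses (1)--(3) as routine. The only cosmetic point is that since $\varepsilon$ is decreasing one should write the bound with absolute values, $|\varepsilon'(r)|\,r\log r \le \varepsilon(r)^3$, as the paper does in this proof (and as Lemma \ref{4.2} really asserts); this does not affect the argument.
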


\begin{proof}
To show the statement is true we only need to check whether $r\rho'(r)\log r\to 0$ as $r\to \infty$. This follows easily from the above lemma. Taking $N=1$ in (\ref{varepsil}), we see that
\begin{equation*}
\begin{aligned}
\left| r\rho'(r)\log r \right| &= \left| \varepsilon'(r) r \log r \right| \leq A[\varepsilon(r)]^3 \to 0 ~\text{~~as~~}~ r\to\infty.
\end{aligned}
\end{equation*}
The other conditions are easy to verify. We omit it here.
\end{proof}

We begin our construction of an entire function. Let $\varepsilon(r)$, $\rho(r)$ be as above. Let $\{a_n\}_{n\geq 0}$ be a positive real sequence tending to infinity which is chosen such that
\begin{equation*}
1\leq a_0\leq a_1 \leq \dots ,
\end{equation*}
and
\begin{equation}\label{nr}
n(r)=r^{\rho(r)}+\mathcal{O}\left(1\right),
\end{equation}
where $n(r)$ counts the number of elements $a_n$ which satisfies $a_n \leq r$. Recall that the exponent of convergence of the sequence $\{a_n\}_{n\geq 1}$ is defined by
\begin{equation*}
\lambda:=\inf\left\{\mu>0:~\sum_{k=0}^{\infty}\frac{1}{a_{k}^{\mu}}<\infty\right\}.
\end{equation*}
Moreover, it is well known that
\begin{equation*}
\lambda=\limsup_{r\to\infty}\frac{\log n(r)}{\log r}.
\end{equation*}
Then it is easy to see that $\lambda=1/2$. Therefore, the infinite product
\begin{equation}\label{f}
f(z)=\prod_{n=0}^{\infty}\left(1-\frac{z}{a_n}\right)
\end{equation}
converges locally uniformly and hence it is an entire function. Note that $\varepsilon(r)^{3} r^{\rho(r)}\to\infty$ as $r\to \infty$. Instead of \eqref{nr} it suffices to assume that
\begin{equation}\label{n}
n(r)=r^{\rho(r)}+\mathcal{O}\left(\varepsilon(r)^{3} r^{\rho(r)}\right),
\end{equation}
and the infinite product defined in above way is still an entire function. Now we consider the asymptotic behaviour of $f$ outside a small unbounded domain containing the positive real axis. We prove the following important property for the above function $f$. For a related result, we refer to \cite[Theorem 1.5]{bergweiler17}.

\begin{lemma}[Asymptotic representation]\label{asymp}
For $f$, $\varepsilon$ and $\rho$ defined above, we have
\begin{equation}\label{asmptot}
\log|f(re^{i\theta})|=\frac{\pi\cos((\theta-\pi)\rho(r))}{\sin(\pi\rho(r))}r^{\rho(r)}+\mathcal{O}\left(\varepsilon(r)^2 r^{\rho(r)}\right), 
\end{equation}
for
\begin{equation}\label{theta}
\varepsilon(r)\leq \theta \leq 2\pi-\varepsilon(r)
\end{equation}
as $r\to\infty$.
\end{lemma}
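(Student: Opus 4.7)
The approach is to pass from the product to a Stieltjes integral representation and then carry out an asymptotic analysis anchored on the proximate order $\rho(r)$. Since the zeros $\{a_n\}$ lie on the positive real axis and $\rho(r)\to 1/2<1$, the product is of genus zero and one has
$$\log|f(re^{i\theta})|=\int_{0}^{\infty}\log\left|1-\frac{re^{i\theta}}{t}\right|dn(t).$$
Using \eqref{n} I would write $n(t)=t^{\rho(t)}+g(t)$ with $|g(t)|=\mathcal{O}(\varepsilon(t)^{3}t^{\rho(t)})$ and split the integral into a smooth ``main'' part (coming from $t^{\rho(t)}$) and a ``remainder'' (coming from $g$). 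Differentiating the smooth density gives
$$\frac{d}{dt}t^{\rho(t)}=\rho(t)\,t^{\rho(t)-1}\Bigl(1+\varepsilon'(t)\,t\log t/\rho(t)\Bigr),$$
and Lemma \ref{4.2} with $N=1$ yields $\varepsilon'(t)\,t\log t=\mathcal{O}(\varepsilon(t)^{3})$, so the bracket equals $1+\mathcal{O}(\varepsilon(t)^{3})$.

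Next, in the main part I would substitute $t=rs$. The proximate-order property \eqref{prvar} gives $\rho(rs)-\rho(r)=\mathcal{O}(\varepsilon(r)^{2}/\log r)$ uniformly for $s$ in fixed compact subsets of $(0,\infty)$, and together with the absolute convergence of the model integrand at $s\to 0^{+}$ and $s\to\infty$ (which uses $0<\rho(r)<1$) this reduces the main part to
$$r^{\rho(r)}\rho(r)\int_{0}^{\infty}\log\bigl|1-e^{i\theta}/s\bigr|\,s^{\rho(r)-1}ds+\mathcal{O}\bigl(\varepsilon(r)^{2}r^{\rho(r)}\bigr).$$
The remaining base integral is classical: either by contour deformation, or by the substitution $s\mapsto 1/s$ followed by the calculus of residues applied to $\log(1-e^{i\theta}s)\,s^{-\rho-1}$, it evaluates to $\pi\cos((\theta-\pi)\rho(r))/\sin(\pi\rho(r))$, delivering the main term in \eqref{asmptot}. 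The $g$-remainder is handled by Riemann-Stieltjes integration by parts, moving a $t$-derivative onto the logarithmic kernel and using $|g(t)|=\mathcal{O}(\varepsilon(t)^{3}t^{\rho(t)})$; a direct bookkeeping then shows this contribution is $\mathcal{O}(\varepsilon(r)^{3}r^{\rho(r)})$, which is absorbed by the target error.

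The main obstacle is the near-singularity of the kernel in the zone $t\asymp r$ as $\theta\to 0$ or $\theta\to 2\pi$: when $t\approx r$, $\log|1-re^{i\theta}/t|$ behaves like $\log|1-e^{i\theta}|\approx\log\theta$, which blows up on the permissible range of $\theta$. The hypothesis $\theta\geq\varepsilon(r)$ is precisely what is needed to tame this: it guarantees $|re^{i\theta}-t|\gtrsim r\varepsilon(r)$ for real $t\asymp r$, so the singularity contributes only $\mathcal{O}(\log(1/\varepsilon(r)))$, and this extra factor is killed by the $\varepsilon(r)^{3}$ savings in the $g$-part and by the $\varepsilon(r)^{2}$ savings in the main-part substitution step. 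An alternative route, essentially the strategy of \cite[Theorem 1.5]{bergweiler17} to which the paper already alludes, is to use Poisson--Jensen-type identities to bypass explicit resolution of the near-singularity; the uniformity in $\theta\in[\varepsilon(r),2\pi-\varepsilon(r)]$ should then emerge from the same slowly-varying estimates on $\varepsilon(r)$.
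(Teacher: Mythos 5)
Your strategy is essentially the paper's: both pass to the Riemann--Stieltjes representation of $\log|f|$, compare $n(t)$ with the power-law model $t^{\rho(r)}$, evaluate the resulting Mellin-type integral via its closed form $\int_0^\infty t^{\rho(r)-1}(t-z)^{-1}dt = -\pi e^{-i\pi\rho(r)}z^{\rho(r)-1}/\sin(\pi\rho(r))$, and then control the error using Lemma~\ref{4.2} (the estimate $|\varepsilon'(r)\,r\log r|\le\varepsilon(r)^3$) together with the angular separation $\theta\ge\varepsilon(r)$. Two small points where you are imprecise, though neither changes the outcome. First, the crucial cutoff in the rescaled variable $s=t/r$ is not to a \emph{fixed} compact subset of $(0,\infty)$; the paper cuts at $[a(r),b(r)]$ with $b(r)=1/a(r)=\exp\{\varepsilon(r)^3\log\log r\}$, an interval that slowly expands with $r$, tuned so that $r^{\rho(kr)-\rho(r)}=\exp\{(1+o(1))\varepsilon(r)^3\}$ on that range while simultaneously $a(r)^{1/4}=o(\varepsilon(r)^2)$ kills the two tail integrals. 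A genuinely fixed $[1/M,M]$ would leave uncontrolled tails, and once you let $M\to\infty$ you are forced into a choice of exactly this shape. Second, near $t\asymp r$ with $|\theta|=\varepsilon(r)$ the kernel costs a factor $1/\sin(\varepsilon(r)/2)\sim 1/\varepsilon(r)$, so the contribution of the $g$-remainder is only $\mathcal{O}(\varepsilon(r)^2 r^{\rho(r)})$ rather than the $\mathcal{O}(\varepsilon(r)^3 r^{\rho(r)})$ you claim; since the announced error in \eqref{asmptot} is $\mathcal{O}(\varepsilon(r)^2 r^{\rho(r)})$, this is harmless, but it is worth knowing that the middle-range estimate, not the tails, is what fixes the exponent $2$.
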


\begin{proof}
Following the standard argument using Riemann-Stieltjes integral (see \cite[Chapter 2, Section 5]{goldbergmero}), we have
$$\log f(z)=-z\int_{0}^{\infty} \frac{n(t,0)}{t(t-z)}dt$$
and
\begin{equation}\label{I}
I(z):=\int_{0}^{\infty}\frac{t^{\rho(r)}}{t(t-z)}dt=-\dfrac{\pi e^{-i\pi\rho(r)}}{\sin (\pi\rho(r))}z^{\rho(r)-1},
\end{equation}
defined for $0<\arg z<2\pi$. Here $n(t,0)$ denotes the number of zeros of $f$ in the disk $\{|z|<t\}$ and is equivalent to \eqref{nr}. For $z=re^{i\theta}$,
\begin{equation}\label{zI}
\re\left(zI(z)\right)=-\dfrac{\pi \cos(\rho(r)(\pi-\theta))}{\sin(\pi(\rho(r))}r^{\rho(r)}.
\end{equation}
By (\ref{n}), (\ref{I}) and (\ref{zI}) we have
\begin{equation}\label{int}
\begin{aligned}
\left|\log|f(z)|+\re\left(zI\left(z\right)\right)\right|&=\left|\re\left( z\int_{0}^{\infty}\dfrac{t^{\rho(r)}-n(t,0)}{t(t-z)}dt \right)\right|\\
&\leq r\int_{0}^{\infty}\dfrac{\left|n(t,0)-t^{\rho(r)}\right|}{t\left|t-z\right|}dt.
\end{aligned}
\end{equation}
To estimate the integral on the right-hand side of (\ref{int}), we consider
\begin{equation}\label{abde}
b(r)=\frac{1}{a(r)}=\exp\left\{\varepsilon(r)^3 \log\log r \right\}.
\end{equation}
We claim two properties of $a(r)$ and $b(r)$. First, $b(r)\to\infty$ as $r\to\infty$. This follows easily from (\ref{ano}) and (\ref{phi}), from which we see that $\Phi(r)\leq \log^{k}(r)$ for large $r$ and for any $k\in\mathbb{N}$. Hence, $\varepsilon(r)\geq {1}/{\log^{k}(r)}$ for any $k\geq 1$. Now the first claim follows. The second one is that, for any positive constant $\delta<1$,
\begin{equation}\label{ab}
a(r)^{\delta}=\frac{1}{b(r)^{\delta}}=o\left(\varepsilon(r)^2\right).
\end{equation}
In fact it can easily be seen that, this is true if the following holds:
\begin{equation*}
\frac{\varepsilon^{3}(r)\log\log r}{\log \varepsilon(r)}\to-\infty.
\end{equation*}
But this follows since $1/\log\varepsilon(r)\leq -\varepsilon(r)$ for large $r$ and $\varepsilon(r)^4\log\log r\to\infty$ as $r\to\infty$ since $\varepsilon(r)$ tends to $0$ slower than $1/\log^m r$ for every $m\in\mathbb{N}$.\\

Now following the argument in the proof of Theorem 2.2 in \cite[Chapter 2, Section 2]{goldbergmero}, for every $k\in [a(r), b(r)]$ there exists $k^* \in [a(r), b(r)]$ such that
\begin{equation}\label{rhominus}
\begin{aligned}
\left|\rho(kr)-\rho(r)\right|&=k^{*}r \left|\rho'(k^{*}r)\right|\left|\log k\right|\\
&= k^{*}r\left|\rho'(k^{*}r)\right|\left|\log (k^{*}r)\right|\frac{\left|\log k\right|}{\left|\log (k^{*}r)\right|}.
\end{aligned}
\end{equation}
By \hyperref[prox]{Lemma \ref{prox}}, $\rho(r)$ is a proximate order. The definition of proximate orders and \hyperref[4.2]{Lemma \ref{4.2}} then imply that
\begin{equation}\label{erm1}
k^{*}r\left|\rho'(k^{*}r)\right|\left|\log (k^{*}r)\right| =k^{*}r\left|\varepsilon'(k^{*}r)\right|\left|\log (k^{*}r)\right| \leq \frac{1}{\log\log (k^{*} r)}
\end{equation}
for large $r$. Recall that $a(r)$ and $b(r)$ are defined in (\ref{abde}). Since $k\leq b(r)$ and $k^{*}\geq a(r)$, we see that
\begin{equation}
\frac{\left|\log k\right|}{\left|\log (k^{*}r)\right|}\leq\frac{\log b(r)}{\log (r a(r))}=\frac{\log b(r)}{\log r - \log b(r)}=\frac{\varepsilon(r)^{3} \log\log r}{\log r-\varepsilon(r)^3 \log\log r},
\end{equation}
which, by a simple computation, \eqref{rhominus} and (\ref{erm1}), yields
$$\left|\rho(kr)-\rho(r)\right|\leq\left(1+o\left(1\right)\right)\frac{\varepsilon(r)^3}{\log r}  ~\text{~as~}~r\to\infty.$$


Therefore we see that, for all such $k$,
\begin{equation}\label{keyin}
\begin{aligned}
r^{\rho(kr)-\rho(r)}&=\exp\left\{(\rho(kr)-\rho(r))\log r\right\}\\
&=\exp\left\{(1+o(1)) \varepsilon(r)^3\right\}~\text{as~}~r\to\infty.
\end{aligned}
\end{equation}

Take $\delta=1/4$, then $\delta < \min\{\rho(t), 1-\rho(t)\}$ for large $t$ since $\rho(t)\to 1/2$ by definition. Moreover, note that for such $\delta$, we have the estimate \eqref{ab}. Now we separate the integral on the right-hand side of (\ref{int}) into three parts as follows:
\begin{equation*}
\begin{aligned}
\int_{0}^{\infty}\dfrac{\left|n(t,0)-t^{\rho(r)}\right|}{t\left|t-z\right|}dt&= \int_{0}^{a(r)r}\dfrac{\left|n(t,0)-t^{\rho(r)}\right|}{t\left|t-z\right|}dt+\int_{b(r)r}^{\infty}\dfrac{\left|n(t,0)-t^{\rho(r)}\right|}{t\left|t-z\right|}dt\\
&+ \int_{a(r)r}^{b(r)r}\dfrac{\left|n(t,0)-t^{\rho(r)}\right|}{t\left|t-z\right|}dt .
\end{aligned}
\end{equation*}

For the first integral, by using standard properties of the proximate order, (\ref{ab}), (\ref{keyin}), and the fact that for $t\in[0, a(r)r]$, we have $|t-z|\geq r(1-a(r))\geq r/2$, we find that there exists a constant $C$ such that
\begin{equation}\label{firste}
\begin{aligned}
\int_{0}^{a(r)r}\dfrac{\left|n(t,0)-t^{\rho(r)}\right|}{t\left|t-z\right|}dt &\leq \frac{C}{r}\left\{\int_{1}^{a(r)r}t^{\rho(t)-1}dt +\int_{1}^{a(r)r}t^{\rho(r)-1}dt  \right\}\\
&\leq C a(r)^{\delta} r^{\rho(r)-1}\\
&= o\left(\varepsilon(r)^2 r^{\rho(r)-1}\right).
\end{aligned}
\end{equation}
For the second integral, $|t-z|\geq t/2$ for $t\geq b(r)r$ and the same argument as above result in the following estimates:
\begin{equation}\label{seconde}
\begin{aligned}
\int_{b(r)r}^{\infty}\dfrac{\left|n(t,0)-t^{\rho(r)}\right|}{t\left|t-z\right|}dt &\leq C \left\{\int_{b(r)r}^{\infty} t^{\rho(t)-2}dt +\int_{b(r)r}^{\infty}t^{\rho(r)-2}dt  \right\}\\
&\leq C \frac{1}{b(r)^{\delta}} r^{\rho(r)-1}\\
&= C a(r)^{\delta} r^{\rho(r)-1}\\
&= o\left(\varepsilon(r)^2 r^{\rho(r)-1}\right).
\end{aligned}
\end{equation}
For $t\in[a(r)r, b(r)r]$ and $z=re^{i\theta}$, we have $|t-z|\geq (t+r)\sin\frac{\varepsilon(r)}{2}$. Combining this with (\ref{n}) and (\ref{keyin}), we can obtain an estimate of the last integral as follows:
\begin{equation}\label{thirde}
\begin{aligned}
\int_{a(r)r}^{b(r)r}\dfrac{\left|n(t,0)-t^{\rho(r)}\right|}{t\left|t-z\right|}dt&\leq\int_{a(r)r}^{b(r)r}\dfrac{\left|n(t,0)-t^{\rho(t)}\right|}{t\left|t-z\right|}dt\\
&+\int_{a(r)r}^{b(r)r}\dfrac{\left|t^{\rho(t)}-t^{\rho(r)}\right|}{t\left|t-z\right|}dt\\
&\leq\frac{C}{\sin\frac{\varepsilon\left(r\right)}{2}}\int_{a(r)r}^{b(r)r}\frac{\varepsilon\left(t\right)^3 t^{\rho(t)}}{t\left(t+r\right)}dt\\
&=\mathcal{O}\left(\frac{\varepsilon\left(r\right)^2}{r}\int_{a(r)}^{b(r)}\frac{(\tau r)^{\rho(\tau r)}}{\tau\left(1+\tau\right)}d\tau   \right)\\
&= \mathcal{O}\left(\varepsilon\left(r\right)^2 r^{\rho(r)-1}\int_{0}^{\infty}\frac{\tau^{\rho(r)}}{\tau\left(1+\tau\right)}d\tau   \right)\\
&= \mathcal{O}\left(\varepsilon\left(r\right)^2 r^{\rho(r)-1}  \right).
\end{aligned}
\end{equation}

Putting all the estimates (\ref{int}), (\ref{firste}), (\ref{seconde}) and (\ref{thirde}) together, we see that for $z=re^{i\theta}$ with $\theta$ satisfying (\ref{theta}), $\log\left|f(z)\right|$ has the following asymptotic representation:
$$\left|\log|f(z)|+\re(zI(z))\right|=\mathcal{O}\left(\varepsilon(r)^2 r^{\rho(r)}\right).$$
\end{proof}

In the following, for some $r_0 > 0$ we define
$$\gamma^{+}=\left\{re^{i\varepsilon(r)}: r\geq r_0\right\},$$
$$\gamma^{-}=\left\{re^{-i\varepsilon(r)}: r\geq r_0\right\},$$
and
$$G(\gamma)=\mathbb{C}\setminus\left\{re^{i\theta}\in\mathbb{C}: \varepsilon(r)\leq \theta \leq 2\pi-\varepsilon(r),~r\geq r_0\right\}.$$

A consequence of the above lemma is the following fact.

\begin{lemma}\label{boundness}
The function $f$ is bounded on $\gamma^{+}$ and $\gamma^{-}$.
\end{lemma}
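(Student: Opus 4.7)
The plan is to apply the asymptotic representation from Lemma \ref{asymp} with $\theta$ taken to be the boundary value $\theta = \varepsilon(r)$ (for $\gamma^+$) and $\theta = 2\pi - \varepsilon(r)$ (for $\gamma^-$), and to show that the leading term is already negative and large enough in absolute value to dominate the error term, so $\log|f(z)| \to -\infty$ along the curves. In particular $|f|$ will be bounded there.

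First I would evaluate the main term in \eqref{asmptot} at $\theta = \varepsilon(r)$. Writing $\rho(r) = \tfrac{1}{2} + \varepsilon(r)$, one computes
\[
\pi\rho(r) = \tfrac{\pi}{2} + \pi\varepsilon(r), \qquad (\varepsilon(r)-\pi)\rho(r) = -\tfrac{\pi}{2} + \bigl(\tfrac{1}{2}-\pi\bigr)\varepsilon(r) + \varepsilon(r)^2,
\]
so that
\[
\sin(\pi\rho(r)) = \cos(\pi\varepsilon(r)) = 1 + O(\varepsilon(r)^2),
\]
\[
\cos\bigl((\varepsilon(r)-\pi)\rho(r)\bigr) = \sin\!\Bigl(\bigl(\tfrac{1}{2}-\pi\bigr)\varepsilon(r) + \varepsilon(r)^2\Bigr) = \bigl(\tfrac{1}{2}-\pi\bigr)\varepsilon(r) + O(\varepsilon(r)^2).
\]
Hence the leading term in \eqref{asmptot} becomes $-c\,\varepsilon(r)\,r^{\rho(r)} + O(\varepsilon(r)^2 r^{\rho(r)})$ for a positive constant $c = \pi(\pi - 1/2)$. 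Lemma \ref{asymp} then yields, for $z = re^{i\varepsilon(r)}\in\gamma^+$ with $r$ large,
\[
\log|f(z)| = -c\,\varepsilon(r)\,r^{\rho(r)} + O\bigl(\varepsilon(r)^2 r^{\rho(r)}\bigr).
\]

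The next step is to observe that $\varepsilon(r) r^{\rho(r)} \to \infty$. Indeed, $r^{\rho(r)} \geq r^{1/2}$ and, by \eqref{phi}, $\varepsilon(r) = 1/\log\Phi(r)$ decays slower than $1/\log^m r$ for every $m \in \mathbb{N}$; so $\varepsilon(r)\sqrt{r}\to\infty$. Since the main term has negative sign and dominates the $O(\varepsilon(r)^2 r^{\rho(r)})$ error, we conclude $\log|f(re^{i\varepsilon(r)})| \to -\infty$, and in particular $|f|$ is uniformly bounded on the tail $\{re^{i\varepsilon(r)}:r\geq R\}$ for large $R$.

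An analogous computation at $\theta = 2\pi - \varepsilon(r)$ gives $\cos((\theta-\pi)\rho(r)) = -\sin(\pi\varepsilon(r) - \varepsilon(r)/2 - \varepsilon(r)^2) = -(\pi-\tfrac{1}{2})\varepsilon(r) + O(\varepsilon(r)^2)$, leading to the same conclusion on $\gamma^-$. To cover the compact initial portion of each curve, I would just note that $\gamma^+$ and $\gamma^-$ are closed subsets of $\mathbb{C}$ (as continuous images of $[r_0,\infty)$) and that $f$ is entire, so on the compact piece $\{re^{\pm i\varepsilon(r)} : r_0 \leq r \leq R\}$ the function is trivially bounded. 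Combining the two estimates gives boundedness on all of $\gamma^+\cup\gamma^-$.

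The main technical point is the computation of the leading term and the verification that $\varepsilon(r) r^{\rho(r)}$ diverges; both are essentially bookkeeping given Lemma \ref{asymp} and \eqref{phi}, so I do not expect a serious obstacle beyond careful arithmetic with the small parameter $\varepsilon(r)$.
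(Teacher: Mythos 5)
Your proposal is correct and follows essentially the same route as the paper's proof: substitute $\theta=\varepsilon(r)$ (resp.\ $\theta=2\pi-\varepsilon(r)$) into the asymptotic formula of Lemma~\ref{asymp}, expand the trigonometric factor to find a negative leading term of size $\sim -\pi(\pi-\tfrac12)\varepsilon(r)r^{\rho(r)}$, and observe that this dominates the $\mathcal{O}(\varepsilon(r)^2 r^{\rho(r)})$ error, so $\log|f|<0$ for large $r$. Your remark that $\varepsilon(r)r^{\rho(r)}\to\infty$, giving $\log|f|\to-\infty$, is a stronger (correct) observation than what the paper records, though only $\log|f|<0$ is needed for boundedness.
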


\begin{proof}
This follows from the asymptotic representation of $f$ given in \hyperref[asymp]{Lemma \ref{asymp}}. For sufficiently large $r$,
$$\rho(r)(\pi-\varepsilon(r))=\frac{\pi}{2}+\varepsilon(r)\left(\pi-\frac{1}{2}-\varepsilon(r)\right)>\frac{\pi}{2}.$$
So we have
\begin{equation*}
\begin{aligned}
\cos\left(\rho(r)(\pi-\varepsilon(r)\right)&=-\sin\left(\varepsilon(r)\left(\pi-\frac{1}{2}-\varepsilon(r)\right)\right)\\
&=-(1+o(1))\left(\pi-\frac{1}{2}\right)\varepsilon(r).
\end{aligned}
\end{equation*}
Since $\varepsilon(r)^2 =o\left(\varepsilon(r)\right)$, it follows from the asymptotic representation of $\log|f|$ in \hyperref[asymp]{Lemma \ref{asymp}} that
$$\log\left|f(re^{i\theta})\right|<0,$$
for $|\theta|=\varepsilon(r)$ and for $r$ sufficiently large.
\end{proof}

\begin{lemma}\label{bdd}
$f(z)$ is bounded in $G(\gamma)$.
\end{lemma}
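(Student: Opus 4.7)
The plan is to derive a contradiction via the Denjoy--Carleman--Ahlfors theorem in the form \eqref{abb11}. Suppose $f$ is unbounded on $G(\gamma)$. I will produce two disjoint unbounded components of a sublevel set $\{|f|>R\}$, one inside $G(\gamma)$ and one outside, yielding at least two logarithmic tracts of $f$ over $\infty$; this contradicts the fact that $\rho(f)=1/2$.

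First, I collect bounds on $f$ along the boundary of $G(\gamma)$. By \hyperref[boundness]{Lemma \ref{boundness}} there exists $M_1>0$ with $|f|\le M_1$ on $\gamma^+\cup\gamma^-$, and by continuity of $f$ on the compact disk $\overline{D(0,r_0)}$ there is $M_2$ with $|f|\le M_2$ there; set $M=\max\{M_1,M_2\}$. Then $|f|\le M$ on the whole topological boundary $\partial G(\gamma)$, which consists of $\gamma^+$, $\gamma^-$, and the arc $\{r_0 e^{i\theta}:\varepsilon(r_0)\le\theta\le 2\pi-\varepsilon(r_0)\}$. This boundary separates $G(\gamma)$ from $\mathbb{C}\setminus\overline{G(\gamma)}$.

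Next, I locate points where $|f|$ is large on both sides of $\partial G(\gamma)$. From \hyperref[asymp]{Lemma \ref{asymp}} at $\theta=\pi$,
$$\log|f(-r)|=\frac{\pi}{\sin(\pi\rho(r))}r^{\rho(r)}+\mathcal{O}\!\left(\varepsilon(r)^2 r^{\rho(r)}\right)\longrightarrow\infty,$$
so there exists $r_1$ with $-r_1\in\mathbb{C}\setminus\overline{G(\gamma)}$ and $|f(-r_1)|>M+1$. By the contradiction hypothesis there is also $z_0\in G(\gamma)$ with $|f(z_0)|>M+1$. Choose $R\in(M,\min\{|f(-r_1)|,|f(z_0)|\})$, and let $V_1, V_2$ be the components of $U_R:=\{|f|>R\}$ containing $z_0$ and $-r_1$, respectively. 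Since $|f|\le M<R$ on $\partial G(\gamma)$, neither component can cross this boundary, hence $V_1\subset G(\gamma)$ and $V_2\subset\mathbb{C}\setminus\overline{G(\gamma)}$, so $V_1\cap V_2=\emptyset$. A bounded component of $U_R$ would violate the maximum modulus principle, so both $V_1$ and $V_2$ are unbounded. After enlarging $R$ above the bound on $S(f)$ if necessary, they give two distinct logarithmic tracts of $f$ over $\infty$.

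Applying \eqref{abb11} with $N\ge 2$ therefore yields $\log\log M(r,f)\ge\log r-\mathcal{O}(1)$ for large $r$, i.e., $\rho(f)\ge 1$. On the other hand, $f$ is defined by \eqref{f} as a canonical product of genus zero whose zero sequence has exponent of convergence $1/2$ (by \eqref{nr}), so Hadamard's factorization theorem gives $\rho(f)=1/2$; alternatively this order bound follows directly from \hyperref[asymp]{Lemma \ref{asymp}} combined with standard canonical-product estimates. This contradicts $\rho(f)\ge 1$, so $f$ must be bounded on $G(\gamma)$. The main obstacle is the topological separation step, ensuring that $\partial G(\gamma)$ (rather than just $\gamma^+\cup\gamma^-$) genuinely confines $V_1$ to the cusp and $V_2$ to its complement; once this is in place, the application of Denjoy--Carleman--Ahlfors is routine.
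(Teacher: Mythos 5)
Your proof is correct and follows essentially the same route as the paper: Denjoy--Carleman--Ahlfors bounds the number of unbounded components of $\{|f|>R\}$ by $\max\{1,2\rho(f)\}=1$, Lemma \ref{asymp} shows $f\to\infty$ along the negative real axis (so the single unbounded component lies outside $G(\gamma)$), and Lemma \ref{boundness} plus the boundary argument you give confine it there, forcing $f$ to be bounded in $G(\gamma)$; you simply run this as a proof by contradiction with the topology spelled out in more detail than the paper does. One small remark: the clause about ``enlarging $R$ above the bound on $S(f)$'' to obtain \emph{logarithmic} tracts is unnecessary and slightly awkward here, since $f\in\mathcal{B}$ is not established until Lemma \ref{B} (which itself relies on this lemma); the form \eqref{abb11} of Denjoy--Carleman--Ahlfors applies to plain unbounded components of $\{|f|>R\}$ for any entire $f$, and that is all you use.
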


\begin{proof}
By Denjoy-Carleman-Ahlfors-Theorem, if $g$ is entire then the number of components of $\left\{z\in\mathbb{C}: |g(z)|>R\right\}$ for $R>0$ is less than or equal to $\max\left\{1, 2\rho(g)\right\}$, where $\rho(g)$ is the order of $g$. Therefore, our function $f$, which is constructed in (\ref{f}) and has order of growth $1/2$, has at most one tract. However, the asymptotic formula (\ref{asmptot}) for $f$ in \hyperref[asymp]{Lemma \ref{asymp}} implies that $f(z)$ is unbounded when $z$ goes to infinity along the negative real axis. So this means that $f$ has a tract containing the negative real axis. By \hyperref[boundness]{Lemma \ref{boundness}} $f$ is bounded on $\gamma^+$ and $\gamma^-$. Therefore, the only tract of $f$ should be contained in $\mathbb{C}\setminus G(\gamma)$, which means that $f$ is bounded in $G(\gamma)$.
\end{proof}

\begin{lemma}\label{B}
The function $f$ constructed in \eqref{f} is in the Eremenko-Lyubich class $\b$.
\end{lemma}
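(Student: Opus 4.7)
The plan is to bound $S(f)$ by separately controlling the critical values and the finite asymptotic values of $f$, both in terms of $M := \sup_{G(\gamma)}|f|$, which is finite by Lemma \ref{bdd}.

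For the critical values, I would first show that every critical point of $f$ lies on the positive real axis. Since the zeros $a_n$ are positive real, the logarithmic derivative
$$
\frac{f'(z)}{f(z)} \;=\; -\sum_{n=0}^{\infty}\frac{1}{a_n - z}
$$
has imaginary part $-y\sum_n \bigl((a_n-x)^2+y^2\bigr)^{-1}$ at $z = x+iy$, which is strictly nonzero for $y \neq 0$. Combined with $f \neq 0$ off $\mathbb{R}$, this rules out critical points off $\mathbb{R}$; on $(-\infty,0]$ the series consists of strictly positive terms, so $f' \neq 0$ there too. Hence every critical point of $f$ lies in $(0,\infty) \subset G(\gamma)$, and Lemma \ref{bdd} gives $|f(c)| \leq M$.

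For the asymptotic values, I would use the tract structure of $f$. Since $f$ has order $1/2$, Denjoy--Carleman--Ahlfors gives at most one unbounded component of $\{|f|>R\}$ for each $R$, and the maximum modulus principle rules out bounded components (such a component would force $|f|$ to strictly exceed its boundary value $R$). Hence, for $R>M$, the set $T := \{|f|>R\}$ is a single simply-connected unbounded tract. Since by the previous step every critical point of $f$ lies in $G(\gamma) \subset \{|f|\le M\} \subset \c\setminus T$, the restriction $f|_T \colon T \to \{|w|>R\}$ is an unramified holomorphic map from a simply connected domain; this identifies $T$ as a logarithmic tract over $\infty$, and the only asymptotic value reached along paths going to $\infty$ inside $T$ is $\infty$ itself. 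Therefore, if $v \in \c$ is a finite asymptotic value reached along $\Gamma(t)\to\infty$ and $|v|>M$, choosing $R\in(M,|v|)$ puts $\Gamma(t)\in T$ for large $t$, contradicting the absence of finite asymptotic values inside $T$. Hence every finite asymptotic value satisfies $|v|\le M$.

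Combining the two steps gives $S(f)\subset\overline{D(0,M)}$, so $f\in\b$. The main obstacle is the identification of $T$ as a logarithmic tract: $f|_T$ need not be proper (one finds curves along $\partial T$ escaping to $\infty$ with $|f|=R$), and the universal covering property must be deduced from simple-connectedness of $T$ together with the absence of critical points, factoring $f|_T$ through $\exp\colon\{\re z >\log R\}\to\{|w|>R\}$. Everything else is elementary once Lemmas \ref{bdd} and \ref{asymp} are in hand.
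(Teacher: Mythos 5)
Your argument for the critical values is correct and slightly different from the paper's: you locate the critical points via the sign of $\im(f'/f)$ off $\mathbb{R}$ and the positivity of $-f'/f$ on $(-\infty,0]$, whereas the paper observes that $f$ is a locally uniform limit of polynomials $P_n$ with positive real zeros, so $P_n'$ has only positive real zeros (Gauss--Lucas), and Hurwitz then forces the zeros of $f'$ to be real and positive. Both routes cleanly yield that all critical points lie in $(0,\infty)\subset G(\gamma)$, after which Lemma~\ref{bdd} bounds the critical values.

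The asymptotic-value part of your proposal, however, has a genuine gap that you flag but do not close. From the fact that $T=\{|f|>R\}$ is simply connected, unbounded, and free of critical points (for $R>M$), you want to conclude that $f|_T$ is a universal covering of $\{|w|>R\}$, hence that $\infty$ is the only asymptotic value along paths to $\infty$ in $T$. But ``simply connected source $+$ unramified'' is not enough: lifting $f|_T$ through $\exp$ to $F\colon T\to\{\re z>\log R\}$ only gives a locally injective map between simply connected domains, and such maps need not be injective (for example, $z\mapsto e^{-(1+z)/(1-z)}$ maps $\mathbb D$ into $\mathbb D$ with nonvanishing derivative but is infinite-to-one). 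The usual proof that tracts of functions in $\b$ are logarithmic relies on the \emph{absence of singular values} in $\{|w|>R\}$ to get the covering property first; here you are trying to prove exactly that, so the argument is circular unless you supply an independent reason (e.g.\ a Herglotz-type argument showing $\re F-\log R$ is a multiple of the harmonic measure of $\infty$ in $T$, which in turn requires boundary control of $T$). The paper sidesteps this entirely by invoking the Denjoy--Carleman--Ahlfors theorem in its asymptotic-value form: an entire function of order $\rho$ has at most $\max(1,2\rho)$ distinct finite asymptotic values, hence at most one for $\rho=1/2$. A single point is bounded, so together with the bound on critical values this gives $S(f)$ bounded without ever needing the tract to be logarithmic. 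I would replace your tract argument by this direct appeal to DCA.
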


\begin{proof}
We have
$$f(z)=\lim_{n\to\infty} P_{n}(z),$$
where
$$P_{n}(z)=\prod_{k=1}^{n}\left(1-\frac{z}{a_k}\right).$$
Thus also $P'_{n}\to f'$. To show the entire function $f$ we constructed in  (\ref{f}) is in the Eremenko-Lyubich class $\b$, we need to prove that the set of all critical values and asymptotic values of $f$ is bounded. Since all the polynomials $P'_{n}$ have only real positive zeros, Hurwitz's theorem implies that all zeros of $f'$ are also real and positive. So $f$ has only real positive critical points. As $[r_0, \infty)$ lies in the unbounded domain $G(\gamma)$ defined above, \hyperref[bdd]{Lemma \ref{bdd}} means that the set of critical values of $f$ is bounded. Moreover, $f$ has at most one asymptotic value by the Denjoy-Carleman-Ahlfors theorem. Therefore, $f$ belongs to the Eremenko-Lyubich class $\b$.
\end{proof}

\begin{remark}
The \textit{Laguerre-P\'olya class} $\mathcal{LP}$ consists of entire functions which are locally uniform limits of real polynomials with real zeros. Since the $P_n$ have real zeros, $f\in\mathcal{LP}$. The argument used in the proof of Lemma \ref{B} shows that whenever $f\in\mathcal{LP}$, then $f'$ has only real zeros. In fact, if $f\in\mathcal{LP}$, then $f'\in\mathcal{LP}$.
\end{remark}

To show that the escaping set of the function we constructed above has zero Lebesgue measure, we need to apply \hyperref[main theorem]{Theorem \ref{main theorem}}. To do this, we need the following lemma.

\begin{lemma}\label{infinite}
For $\varepsilon(x)={1}/{(\log \Phi(x))}$ and $x>0$, we have
$$\sum_{k=1}^{\infty}\varepsilon\left(E^{k}\left(x\right)\right)=\infty.$$
\end{lemma}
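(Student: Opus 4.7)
The idea is to reduce the $E$-orbit case to the $E_{\beta}$-orbit case, where the conclusion is immediate from Schr\"oder's equation. Indeed, iterating \eqref{s11} gives $\Phi(E_{\beta}^{k}(x))=\lambda^{k}\Phi(x)$, so $\log\Phi(E_{\beta}^{k}(x))=k\log\lambda+\log\Phi(x)$ grows only linearly in $k$, and
\begin{equation*}
\sum_{k=1}^{\infty}\varepsilon\!\left(E_{\beta}^{k}(x)\right)
=\sum_{k=1}^{\infty}\frac{1}{k\log\lambda+\log\Phi(x)}=\infty.
\end{equation*}
The content of the lemma is that the same harmonic-scale divergence is inherited by the faster iteration $E(z)=e^{z}$, which is what the paper means when it says that using $E$ in place of $E_{\beta}$ is ``irrelevant''.

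The key comparison I would establish is a one-step domination: for all sufficiently large $y$,
\begin{equation*}
E_{\beta}^{2}(y)=\exp\!\left(\beta e^{\beta y}\right)\ \geq\ e^{y}=E(y),
\end{equation*}
which reduces to $\beta e^{\beta y}\geq y$ and is trivial for large $y$ since the left-hand side is exponential in $y$. Combined with monotonicity of $E_{\beta}^{2}$ on the real line, a straightforward induction on $k$ then yields
\begin{equation*}
E_{\beta}^{2k}(x)\ \geq\ E^{k}(x)\qquad (k=0,1,2,\dots),
\end{equation*}
provided $x$ exceeds a fixed threshold $X_{0}$; once this holds for $x$, each subsequent iterate $E^{k}(x)$ is at least $X_{0}$ as well, so the one-step comparison applies at every step of the induction.

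Combining this with monotonicity of $\Phi$ on $[\xi,\infty)$ and the iterated Schr\"oder identity, for $x\geq\max(\xi,X_{0})$ we obtain
\begin{equation*}
\Phi\!\left(E^{k}(x)\right)\ \leq\ \Phi\!\left(E_{\beta}^{2k}(x)\right)\ =\ \lambda^{2k}\Phi(x),
\end{equation*}
so $\log\Phi(E^{k}(x))\leq 2k\log\lambda+\log\Phi(x)$ and
\begin{equation*}
\varepsilon\!\left(E^{k}(x)\right)
=\frac{1}{\log\Phi\!\left(E^{k}(x)\right)}
\ \geq\ \frac{1}{2k\log\lambda+\log\Phi(x)}.
\end{equation*}
Summing in $k$ gives divergence at the harmonic rate. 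Since $E^{k}(x)\to\infty$ for every $x>0$, we can absorb the threshold condition by discarding finitely many initial terms (replacing $x$ by $E^{k_{0}}(x)$ for $k_{0}$ large), so the argument works for arbitrary positive $x$ as stated.

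The only genuinely new ingredient is the eventual inequality $E_{\beta}^{2}\geq E$; this is the main (and only) obstacle, and it is elementary. Everything else is monotonicity of $\Phi$ combined with iteration of \eqref{s11}.
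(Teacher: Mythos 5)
Your proof is correct, but it takes a cleaner route than the paper's.

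The paper also reduces the $E$-orbit to an $E_\beta$-orbit, but by a different comparison: it introduces the auxiliary map $F_\beta(x)=\beta e^x$, observes the conjugacy $F_\beta(\beta x)=\beta E_\beta(x)$, and proves by induction that for a suitable constant $c>\log(2/\beta)$ one has $F_\beta^k(x+c)\geq F_1^k(x)+c$ (where $F_1=E$). Translating back through the conjugacy and using $\beta<1$ yields $E_\beta^{k}(x_2)\geq E^{k}(x_1)$ for the \emph{shifted} starting point $x_2=(x_1+c)/\beta$, with the iteration count $k$ held fixed. You instead keep the starting point fixed and compare across a \emph{doubled} iteration count, via the elementary one-step domination $E_\beta^2(y)\geq E(y)$ for large $y$, which inducts cleanly to $E_\beta^{2k}(x)\geq E^k(x)$. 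Both comparisons feed into the same Schr\"oder identity $\log\Phi(E_\beta^m(y))=m\log\lambda+\log\Phi(y)$; the only difference is that the paper ends up with the denominator $k\log\lambda+\log\Phi(x')$ while you end up with $2k\log\lambda+\log\Phi(x)$, and harmonic divergence is insensitive to the factor of two. Your version avoids the conjugacy bookkeeping and the constant $c$ entirely, at the cost of working with $E_\beta^{2k}$ rather than $E_\beta^{k}$; I'd say it is the more transparent of the two. One small point worth making explicit in a write-up: the threshold $X_0$ for the one-step bound also needs to be large enough that $\log\Phi$ is positive on $[X_0,\infty)$, so that the terms in the sum are defined and positive; your tail argument (replace $x$ by $E^{k_0}(x)$) already handles this.
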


\begin{proof}
Recall that $E_{\beta}(x)=e^{\beta x}$, $E(x)=e^x$. We shall show first that, for any given $x_1$ there exists $x_2>x_1$ such that for any $k\in\mathbb{N}$,
$$E_{\beta}^{k}\left(x_2\right)\geq E^{k}\left(x_1\right).$$
To see this we first consider the following real function defined as
$$F_{\beta}(x)=\beta e^x .$$
In particular, $F_{1}(x)=E(x)$. An easy computation shows that $F_{\beta}(\beta x)=\beta E_{\beta}(x)$. Let $c>\log(2/\beta)$ and $e^x > c$. We have
$$F_{\beta}(x+c)=\beta e^c e^x>2e^x>e^x +c=F_{1}(x)+c>F_{1}(x).$$
Since for any $k\in\mathbb{N}$, $E^{k}(x) >c$, by induction we have
\begin{equation}\label{neq}
F_{\beta}^{k}\left(x+c\right)\geq F_{1}^{k}\left(x\right)+c>F_{1}^{k}\left(x\right).
\end{equation}
If we take $x_1 =x$ and $x_2 ={(x+c)}/{\beta}$, then
$$F_{\beta}^{k}\left(x+c\right)=F_{\beta}^{k}\left(\beta x_2 \right)=\beta E_{\beta}^{k}\left(x_2\right).$$
Together with (\ref{neq}) we see that
$$E_{\beta}^{k}\left(x_2\right)>\beta E_{\beta}^{k}\left(x_2\right)=F_{\beta}^{k}\left(x+c\right)>F_{1}^{k}\left(x_1\right)=E^{k}\left(x_1\right).$$
Our assertion now follows. So for $x>0$ there exists $x'>x$ with $E_{\beta}^{k}(x')>E^{k}(x)$. Thus
\begin{equation*}
\begin{aligned}
\sum_{k=1}^{\infty}\varepsilon(E^{k}(x))&=\sum_{k=1}^{\infty}\frac{1}{\log \Phi\left(E^{k}\left(x\right)\right)}\\
&\geq \sum_{k=1}^{\infty}\frac{1}{\log \Phi\left(E_{\beta}^{k}\left(x'\right)\right)}\\
&=\sum_{k=1}^{\infty}\frac{1}{\log\left(\lambda^k \Phi\left(x'\right)\right)}\\
&= \sum_{k=1}^{\infty}\frac{1}{k\log \lambda + \log \Phi(x')}\\
&=\infty.
\end{aligned}
\end{equation*}
\end{proof}

\begin{proof}[Proof of Theorem \ref{main result}]
We consider the transcendental entire function $f$ constructed in (\ref{f}) which satisfies (\ref{n}). Lemma \ref{bdd} and Lemma \ref{B} together imply that $f$ belongs to the class $\b$ and is bounded in $G(\gamma)$.
Since
$$\varepsilon(r)=\frac{1}{\log \Phi(r)},$$
to show that the Lebesgue measure of $\i(f)$ of $f$ is zero, we shall apply our Theorem \ref{main theorem}, in which we have $\theta_{0}(r)=2\varepsilon(r)$. Therefore,
\begin{equation}\label{diver}
2\sum_{k=1}^{\infty}\varepsilon\left(E^{k}\left(x\right)\right)=2\sum_{k=1}^{\infty}\frac{1}{\log \Phi\left(E^{k}\left(x\right)\right)}=\infty.
\end{equation}
Now Lemma \ref{infinite} implies that (\ref{diver}) is valid. Now, we need to check that the condition \eqref{examplec} is satisfied. This follows from our Lemma \ref{asymp}, which yields
\begin{equation*}
\begin{aligned}
\log M(r,f)&\leq \frac{\pi}{\sin(\pi\rho(r))}r^{\rho(r)}+\mathcal{O}\left(\varepsilon(r)^2 r^{\rho(r)}\right)\\
&=\left(1+o(1)\right)\pi r^{\rho(r)}.\\
\end{aligned}
\end{equation*}
Thus, we obtain that
\begin{equation*}
\begin{aligned}
\log\log M(r,f)&\leq \log r^{\rho(r)}+\mathcal{O}\left(1\right)\\
&=\rho(r)\log r + \mathcal{O}(1)\\
&= \left( \frac{1}{2}+\varepsilon(r) \right)\log r + \mathcal{O}(1).
\end{aligned}
\end{equation*}
By considering $\lambda f(z)$ for a sufficiently small positive constant $\lambda$ if necessary, the conditions in Theorem \ref{main theorem} are satisfied, which means that $\area\j(f)=0$ and in particular $\area\i(f)=0$ .
\end{proof}

\begin{remark}
The function we constructed above shows the sharpness of the Aspen\-berg-Bergweiler condition (\ref{abb}) in the case of only one logarithmic tract over infinity. To obtain an entire function with any finite number, say $N$, logarithmic tracts over infinity, we let $f$ be as in Theorem \ref{main result} and let $g(z)=f(z)^N$ and $h(z)=f(z^N)$. Then $h$ is in class $\b$ and has $N$ logarithmic tracts over infinity. Moreover, the function $h$ satisfies that
$$\log\log M(r,h)\leq \left(\frac{N}{2}+\frac{N}{\log \Phi(r)}\right)\log r +\mathcal{O}(1).$$
\end{remark}
Now we need to show that for such function $h$ we have $\area \i(h)=0$. This follows since $z\in \i(h)$ if and only if $z^N \in \i(g)$. Thus $\area\i(h)=0$ if and only if $\area\i(g)=0$. Now $\area\i(g)=0$ by the same argument that gives $\area\i(f)=0$. Hence $\area\i(h)=0$.

\begin{proof}[Proof of Theorem \ref{corothm1}]
The construction of an entire function in class $\b$ satisfying conditions in the theorem follows the line of proof of Theorem \ref{main result}. The key ingredient is to check that $f$ satisfies the asymptotic representation in \eqref{asmptot} with $\rho(r)$ as in \eqref{defrho}. This in fact follows from the properties of $\varepsilon(r)$ given in the theorem. We omit the details here.
\end{proof}

\section{Adaption to infinite-order entire functions}

We shall use some definitions and some basic facts from value distribution theory of meromorphic functions. See, for example, \cite{goldbergmero} and \cite{hayman1} for an introduction to the theory. We denote by $T(r,f)$ the \textit{Nevanlinna characteristic} of a meromorphic function $f$, and by $m(r,f)$ the \textit{proximity function} of $f$. The order of growth of a meromorphic function $f$ is defined as follows:
\begin{equation}\label{order2}
\rho(f)=\limsup_{r\to\infty}\frac{\log T(r,f)}{\log r}.
\end{equation}
Compare (\ref{order}) and (\ref{order2}). When $f$ is an entire function, $T(r,f)$ can be replaced by the term $\log M(r,f)$. A more precise relation between $T(r,f)$ and $\log M(r,f)$ when $f$ is entire is given by the following result. Moreover, we also need a fundamental result due to Nevanlinna. Note that the results below can be found in \cite[Chapter 1]{goldbergmero}. 
\begin{lemma}\label{maxi}
Let $f(z)$ be an entire function, and let $0<r<R$. Then
\begin{equation}\label{maxii}
T(r,f)\leq \log M(r,f) \leq \frac{R+r}{R-r}T(R,f).
\end{equation}
\end{lemma}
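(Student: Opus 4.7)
The plan is to treat the two inequalities separately, since they come from quite different principles: the left one is essentially a pointwise bound, while the right one requires a representation formula.

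For the left inequality $T(r,f)\leq \log M(r,f)$, I would start from the fact that $f$ is entire, so it has no poles and the counting term in the Nevanlinna characteristic vanishes: $T(r,f)=m(r,f)$. Then I would just observe that for every $\theta$ one has $\log^{+}|f(re^{i\theta})|\leq \log^{+}M(r,f)$, so integrating over $[0,2\pi]$ and dividing by $2\pi$ gives $m(r,f)\leq \log^{+}M(r,f)$. For $r$ large enough (which is all we need, since $f$ is transcendental, and the small-$r$ case can be handled by noting $\log^{+}M(r,f)=\log M(r,f)$ whenever $M(r,f)\geq 1$, or by a trivial adjustment), this produces the desired bound $T(r,f)\leq \log M(r,f)$.

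For the right inequality $\log M(r,f)\leq \frac{R+r}{R-r}T(R,f)$, the key tool will be the Poisson--Jensen formula on the disk $|z|<R$ applied to $f$. Writing $\log|f(z)|$ as the Poisson integral of $\log|f(Re^{i\theta})|$ minus a sum of nonpositive terms coming from the zeros of $f$ in $|z|<R$ (of the form $\log|R(z-a_k)/(R^{2}-\overline{a_k}z)|$), I can drop the sum to obtain the inequality
\begin{equation*}
\log|f(z)|\leq \frac{1}{2\pi}\int_{0}^{2\pi}\frac{R^{2}-|z|^{2}}{|Re^{i\theta}-z|^{2}}\log^{+}|f(Re^{i\theta})|\,d\theta.
\end{equation*}
Next I would estimate the Poisson kernel from above by its maximum value on the circle $|z|=r<R$, namely $(R^{2}-r^{2})/(R-r)^{2}=(R+r)/(R-r)$. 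Pulling this constant outside the integral collapses the right-hand side to $\frac{R+r}{R-r}\,m(R,f)=\frac{R+r}{R-r}T(R,f)$, using again that $f$ is entire so $T=m$. Finally I would take the supremum over $|z|=r$ to pass from $\log|f(z)|$ to $\log M(r,f)$.

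There is no real obstacle here: both inequalities are completely standard, and the only technical step worth flagging is the use of Poisson--Jensen together with the crude kernel bound. The proof could equivalently be packaged via the Borel--Carath\'eodory inequality, but the Poisson--Jensen route is more direct and gives the stated constant $(R+r)/(R-r)$ immediately. I would therefore present it in the order above: Jensen/pointwise for the lower bound, Poisson--Jensen plus kernel estimate for the upper bound, with a reference to \cite[Chapter 1]{goldbergmero} for readers who want more detail.
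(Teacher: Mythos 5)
The paper does not prove this lemma; it simply cites \cite[Chapter 1]{goldbergmero}, and your argument is precisely the standard one given there: $T=m$ for entire $f$ plus the trivial pointwise bound for the left inequality, and Poisson--Jensen with the Blaschke terms dropped and the kernel bounded above by $(R+r)/(R-r)$ for the right inequality. Both steps are correct, and you rightly flag the only delicate point, namely that the left inequality as literally written needs $M(r,f)\geq 1$ (otherwise $T(r,f)\geq 0 > \log M(r,f)$); this is harmless in the paper's context of transcendental $f$ and large $r$, and is the usual convention issue with this lemma.
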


\begin{lemma}[First fundamental theorem of Nevanlinna theory]\label{nevan}
For $a\in\c$,
\begin{equation}
T(r,f)=T \left(r, \frac{1}{f-a}\right)+\mathcal{O}(1).
\end{equation}
\end{lemma}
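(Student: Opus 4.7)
The plan is to derive the identity from Jensen's formula applied to the shifted function $f-a$, using the decomposition $\log t = \log^{+}t - \log^{+}(1/t)$ for $t>0$ to turn the logarithmic mean of $|f-a|$ into a difference of proximity functions. This is the classical route to the First Fundamental Theorem.

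First I would invoke Jensen's formula in its standard form: for a meromorphic $g$ with $g(0)\neq 0,\infty$,
\begin{equation*}
\frac{1}{2\pi}\int_{0}^{2\pi}\log|g(re^{i\theta})|\,d\theta \;=\; \log|g(0)| + N(r,1/g) - N(r,g),
\end{equation*}
and apply it to $g=f-a$. Since the poles of $f-a$ coincide with those of $f$ (with the same multiplicities), $N(r,f-a)=N(r,f)$, so Jensen becomes
\begin{equation*}
\frac{1}{2\pi}\int_{0}^{2\pi}\log|f(re^{i\theta})-a|\,d\theta \;=\; \log|f(0)-a| + N\!\left(r,\tfrac{1}{f-a}\right) - N(r,f).
\end{equation*}
Splitting $\log = \log^{+} - \log^{+}(1/\,\cdot\,)$ rewrites the left-hand side as $m(r,f-a)-m(r,1/(f-a))$, and rearranging yields
\begin{equation*}
T\!\left(r,\tfrac{1}{f-a}\right) \;=\; m(r,f-a) + N(r,f) - \log|f(0)-a|.
\end{equation*}

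The final step is to replace $m(r,f-a)$ by $m(r,f)$ up to a bounded error. From $|f-a|\leq |f|+|a|$ and the same inequality with $f$ and $f-a$ interchanged one gets $\log^{+}|f-a|\leq \log^{+}|f|+\log^{+}|a|+\log 2$ and the reverse bound, so integrating over $|z|=r$ gives
\begin{equation*}
\bigl|\,m(r,f-a)-m(r,f)\,\bigr| \;\leq\; \log^{+}|a|+\log 2.
\end{equation*}
Combined with the previous display, this produces $T(r,1/(f-a)) = m(r,f)+N(r,f)+\mathcal{O}(1) = T(r,f)+\mathcal{O}(1)$, which is the claimed identity.

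There is no serious obstacle here; the argument is a direct unwinding of Jensen's identity. The only point requiring a bit of care is the exceptional case $f(0)\in\{a,\infty\}$: one then uses Jensen's formula in its generalized form where $\log|f(0)-a|$ is replaced by $\log|c_k|$ for the leading nonzero Laurent coefficient of $f-a$ at the origin, together with the term $k\log r$; tracking this through contributes only to the $\mathcal{O}(1)$ error in the statement. The $\log^{+}$-comparison in the last step is the other place where one must be slightly careful, but it is routine once one remembers that $m(r,\cdot)$ is defined via $\log^{+}$ and not $\log$.
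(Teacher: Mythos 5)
Your proof is correct and is exactly the standard Jensen's-formula derivation of the First Fundamental Theorem; the paper gives no proof of its own, simply citing the result to Goldberg--Ostrovskii, and the argument you present is the one found in that reference. The only minor notational slip is writing $\log t = \log^{+}t - \log^{+}(1/t)$ as a ``decomposition'' -- it is better phrased as the identity $\log t = \log^{+}t - \log^{+}(1/t)$ for $t>0$, which you then apply pointwise before integrating -- but this is cosmetic and your handling of the exceptional case $f(0)\in\{a,\infty\}$ via the generalized Jensen formula is appropriate.
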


We note that the second inequality in (\ref{maxii}) is even true when $M(r,f)$ is replaced by $M_{G}(r,f)$ for a meromorphic function $f$ with a logarithmic tract $G$ over $\infty$. Here $M_{G}(r,f)=\max_{z\in G, |z|=r}|f(z)|$. For convenience we state it as follows and refer to \cite[Chapter XI, Section 4.3]{nevanlinna} for a proof.

\begin{lemma}\label{maxag}
Let $f$ be a meromorphic function with a logarithmic tract $G$ over infinity and let $0<r<R$. Then
\begin{equation}
\log M_{G}(r,f) \leq \frac{R+r}{R-r}\left(T(R,f) + \mathcal{O}(1)\right).
\end{equation}
\end{lemma}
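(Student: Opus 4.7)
The plan is to reduce the problem to the standard Poisson majorant estimate for subharmonic functions on the disk $D(0,R)$, by producing a global subharmonic extension of $\log|f|$ that vanishes outside the tract $G$. Let $R_0>0$ be such that $|f|=R_0$ on $\partial G$ and $|f|>R_0$ in $G$; since $G$ is a component of the preimage of a spherical disk around $\infty$ on which $f$ is a universal cover, $f$ has neither zeros nor poles in $G$, and therefore $\log|f|$ is \emph{harmonic} throughout $G$. By passing to a smaller logarithmic tract if necessary, I assume $R_0\geq 1$; the quantity $\log R_0$ will ultimately supply the $O(1)$ term in the bound.

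Next I would set
\begin{equation*}
v(z) = \begin{cases} \log|f(z)| - \log R_0, & z\in G,\\ 0, & z\in \mathbb{C}\setminus G,\end{cases}
\end{equation*}
and verify that $v$ is non-negative, continuous across $\partial G$ (both formulas give $0$ there), and subharmonic on $\mathbb{C}$. Subharmonicity is automatic in the interiors of $G$ and of $\mathbb{C}\setminus\overline{G}$, where $v$ is harmonic; at a boundary point $z_0\in\partial G$ it follows from $v(z_0)=0$ together with $v\geq 0$ on a small disk around $z_0$, which immediately yields the mean-value inequality. Granted that $v$ is subharmonic on $\mathbb{C}$, the Poisson integral inequality gives, for any $|z_0|=r<R$,
\begin{equation*}
v(z_0)\;\leq\;\frac{R+r}{R-r}\cdot\frac{1}{2\pi}\int_0^{2\pi} v(Re^{i\theta})\,d\theta.
\end{equation*}

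To finish, I would bound $v(Re^{i\theta})\leq \log^+|f(Re^{i\theta})|$ pointwise (a short case split using $R_0\geq 1$ and whether $Re^{i\theta}\in G$), which converts the boundary average into $m(R,f)\leq T(R,f)$. Translating back via $\log|f(z_0)|=v(z_0)+\log R_0$ for $z_0\in G$, and using $\tfrac{R+r}{R-r}\geq 1$ to absorb the additive $\log R_0$ into the $O(1)$ on the right, then taking the supremum over $z_0\in G\cap\{|z|=r\}$, gives the claimed estimate.

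The main obstacle is the verification that the piecewise-defined $v$ is genuinely subharmonic across $\partial G$, not merely separately in the two pieces. This is where the specific choice of extension (vanishing outside $G$, matching the boundary value $0$ on $\partial G$) is essential, and where the hypothesis that $G$ is a logarithmic tract really does its work, by guaranteeing that $\log|f|$ carries no Riesz mass from zeros or poles inside $G$ and that a constant extension across $\partial G$ produces nothing spurious. Once this is justified, the rest is a routine Poisson-kernel estimate; the point is that this trick avoids the pole and zero contributions that would otherwise appear in a direct Poisson--Jensen argument for a general meromorphic $f$.
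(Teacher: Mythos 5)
Your proof is correct, and it supplies a self-contained argument for a result that the paper does not actually prove but only cites (Nevanlinna, \emph{Analytic Functions}, Ch.\ XI, \S 4.3). The heart of the argument is the gluing lemma for subharmonic functions: you take $u\equiv 0$ on $\mathbb{C}$ and $\log|f|-\log R_0$ on $G$, note that the latter is harmonic on $G$ (no zeros or poles there), non-negative, and has boundary limit $0$ on $\partial G$, so the piecewise function $v$ coincides with $\max(\log|f|-\log R_0,\,0)$ in $G$ and with $0$ outside, which is subharmonic by the standard gluing result. From there the Poisson estimate for a non-negative subharmonic function, together with the pointwise bound $v\le \log^{+}|f|$ on $|z|=R$ and the inequality $m(R,f)\le T(R,f)$, yields the stated bound, with $\log R_0$ absorbed into the $\mathcal{O}(1)$ using $(R+r)/(R-r)\ge 1$. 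This is exactly the device that circumvents the pole terms which would appear in a direct Poisson--Jensen expansion for a general meromorphic $f$, and it is in the same spirit as the classical treatment in Nevanlinna's book. The only thing worth stating explicitly is that you are invoking the gluing lemma (or equivalently the characterization of subharmonicity via the local sub-mean-value property for continuous functions), rather than leaving the boundary verification as an informal remark; and you should note that for the lemma to have content you take $r$ large enough that the circle $|z|=r$ actually meets $G$, which is harmless since the estimate is used only for large $r$.
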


The dynamics of meromorphic functions with direct or logarithmic singularities share many properties with those of entire functions. We refer to \cite{bergweiler3} for detailed discussions of dynamics of these functions.

We recall the following result due to Tsuji \cite{tsuji1975potential}, which is connected to the proof of Denjoy-Carleman-Ahlfors theorem. See, for instance, \cite[Chapter $5$]{goldbergmero}. To formulate it precisely, we need some notations. For an unbounded domain $G$ with boundary $\Gamma$ and $r>0$ such that $\{z: |z|=r\}\cap \Gamma \neq \emptyset$, denote by 
$$\beta(r)=\meas\left\{\theta\in[0, 2\pi]: re^{i\theta}\in G\right\}.$$
If $\{z: |z|=r\}\cap \Gamma = \emptyset$ then we define $\beta(r)=\infty$.

\begin{lemma}\label{tsuji}
Let $G$ be an unbounded domain and $\Gamma$ its boundary. Let $f$ be continuous in $G\cup\Gamma$ and holomorphic in $G$.
Suppose that $f$ is bounded on $\Gamma$ but unbounded in $G$. Then
$$\log\log M_{G}(r,f)\geq \pi \int_{r_1}^{\alpha r}\frac{dt}{t\beta(t)}+ \mathcal{O}(1),$$
where $r_1 >0$ and $0<\alpha<1$ do not depend on $r$.
\end{lemma}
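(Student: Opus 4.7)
The plan is to adapt the classical Carleman--Tsuji length-area argument that underlies the Denjoy-Carleman-Ahlfors theorem itself. By replacing $f$ with $f/C$ for a suitable constant $C$, I may assume $|f|\leq 1$ on $\Gamma$; this only affects the conclusion by an $\mathcal{O}(1)$ term. Next, perform a logarithmic change of variables $w=\log z$ on a suitable lift $\widetilde G$ of $G\cap\{|z|>1\}$ and set $\widetilde f(w)=f(e^w)$. Then $\widetilde f$ is holomorphic in $\widetilde G$, continuous up to $\partial\widetilde G$, bounded by $1$ there, and unbounded. Crucially, the one-dimensional Lebesgue measure of $\widetilde G\cap\{\re w=u\}$ equals $\beta(e^u)$.

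Fix any $w_0\in\widetilde G$ with $\log|\widetilde f(w_0)|>0$; this is possible since $\widetilde f$ is unbounded in $\widetilde G$. For each large $u$, let $\widetilde G_u=\widetilde G\cap\{\re w<u\}$ be the truncated subdomain containing $w_0$, and let $\omega_u$ denote the harmonic measure, evaluated at $w_0$, of the cross-section $\widetilde G\cap\{\re w=u\}$ inside $\widetilde G_u$. Since $\log|\widetilde f|$ is subharmonic in $\widetilde G_u$, bounded by $0$ on the lateral boundary (a subset of $\partial\widetilde G$) and by $\log M_G(e^u,f)$ on the cross-section, the two-constants theorem gives
\begin{equation}\label{tc}
\log|\widetilde f(w_0)|\;\leq\;\omega_u\cdot\log M_G(e^u,f).
\end{equation}

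The technical heart of the proof is the Carleman harmonic-measure estimate
\begin{equation}\label{car}
\omega_u\;\leq\;C\exp\!\left(-\pi\int_{u_1}^{u-a}\frac{ds}{\beta(e^s)}\right),
\end{equation}
valid for large $u$, with constants $u_1>0$, $a>0$, $C>0$ depending on $\widetilde G$ and $w_0$ but not on $u$. This is the step I expect to be the main obstacle. It is proved by the length-area (extremal length) method: compare $\omega_u$ to the extremal length of the family of curves in $\widetilde G_u$ separating $w_0$ from the cross-section $\{\re w=u\}$, and bound the length of each such curve intersected with the vertical slice $\{\re w=s\}$ from below by $\beta(e^s)$. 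The buffer $a>0$ in the upper limit is needed to accommodate the passage from the fixed basepoint $w_0$ to the first slice where the length estimate applies uniformly; it is precisely this buffer that produces the factor $\alpha=e^{-a}<1$ in the statement of the lemma.

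Combining \eqref{tc} and \eqref{car} and taking logarithms twice yields
\[
\log\log M_G(e^u,f)\;\geq\;\pi\int_{u_1}^{u-a}\frac{ds}{\beta(e^s)}+\mathcal{O}(1).
\]
Substituting $r=e^u$, $t=e^s$, $r_1=e^{u_1}$ and $\alpha=e^{-a}\in(0,1)$ converts the integral into $\pi\int_{r_1}^{\alpha r}dt/(t\beta(t))$, which is the desired estimate; by construction $r_1$ and $\alpha$ depend only on $G$ and the fixed basepoint $w_0$, not on $r$.
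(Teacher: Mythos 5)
The paper does not prove this lemma at all: it is quoted as a known result, attributed to Tsuji and to Goldberg--Ostrovskii (Chapter 5), and is simply invoked in the proof of Theorem 5.2. So you are supplying a proof where the paper gives none, and there is no paper argument to compare against line by line.

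Your sketch does reproduce the standard route (the one underlying the cited sources): normalize $|f|\le 1$ on $\Gamma$, pass to $w=\log z$, apply the two-constants theorem in the truncated domain, and then invoke a Carleman/Ahlfors harmonic-measure estimate. Steps \eqref{tc} and the final bookkeeping (the change of variables $t=e^s$, $r_1=e^{u_1}$, $\alpha=e^{-a}$) are correct. The weak point is precisely the estimate you flag as \eqref{car}, and your one-sentence justification of it is not quite right: you say you ``bound the length of each such [separating] curve intersected with the vertical slice $\{\re w = s\}$ from below by $\beta(e^s)$,'' but a separating curve intersects a fixed vertical line in a set of linear measure zero, so that statement has no content. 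The length--area inequality that actually enters runs through the \emph{joining} family: with the admissible metric $\rho(w)=1/\beta(e^{\re w})$ on the band $u_1\le\re w\le u$, one gets $L_\rho(\gamma)\ge\int_{u_1}^{u}ds/\beta(e^s)$ for every path $\gamma$ from $\{\re w=u_1\}$ to $\{\re w=u\}$ and $A_\rho=\int_{u_1}^{u}ds/\beta(e^s)$, hence $\lambda(\Gamma_{\mathrm{join}})\ge\int_{u_1}^{u}ds/\beta(e^s)$; combined with a Beurling-type estimate $\omega_u\lesssim\exp(-\pi\lambda(\Gamma_{\mathrm{join}}))$ (equivalently, the Ahlfors distortion theorem together with the explicit harmonic measure in a strip), this yields \eqref{car}. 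Relatedly, the buffer $a$ does not really come from ``the passage from the basepoint to the first good slice''; it absorbs the additive error term in the Ahlfors distortion theorem (which only applies once the modulus exceeds a threshold) and the dependence of the Beurling constant on the basepoint. With \eqref{car} supplied by one of these standard references, your argument is correct and is essentially the classical Carleman--Tsuji proof; as written, \eqref{car} remains an unproved claim whose stated justification would not survive scrutiny.
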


In our case, we can take the unbounded domain $G$ to be a logarithmic tract over $\infty$ and $\beta(r)$ the corresponding measure in this tract. Furthermore, by choosing $r_1>0$ large enough such that $\{z: |z|=r\}\cap \Gamma \neq \emptyset$ we have $\beta(r)>0$ for $r\geq r_1$.

\smallskip
As mentioned in the introduction, the proof of Theorem \ref{infinite-order2} is only a slightly modified version of that of Theorem \ref{main theorem} if we require our entire function $f$ to be of disjoint type, satisfying conditions in Theorem \ref{main theorem}. To transfer to entire functions not being of disjoint type,  we use a result of Rempe \cite[Theorem 1.1]{rempe8} as in the proof of Theorem \ref{thm1.2}.

\begin{proof}[Proof of Theorem \ref{infinite-order2}]

For the function $f$ given in the theorem, we define
\begin{equation*}
g(z):=\lambda f(z),
\end{equation*}
where $\lambda>0$ is a constant to be determined later. Then $g\in\b$. So we can apply a logarithmic change of variables to $g$ and hence we obtain a function $G$, corresponding to $g$, such that $G$ is a biholomophic map from every component of $W$ onto $H'$, where
$$H'=\left\{~z\in\c:~ \re z>\log r_0 + \log \lambda~\right\}.$$
Now the constant $\lambda$ is chosen sufficiently small such that
\begin{equation*}
\inf\{\re z:~z\in W\}>2(\log r_0 + \log \lambda)
\end{equation*}
and
\begin{equation*}
\inf\{\re z:~z\in W\}-(\log r_0 + \log \lambda) > 64\pi.
\end{equation*}

Therefore, for the function $g$ defined above satisfies the conditions in Theorem \ref{main theorem} by taking
\begin{equation*}
R_1:=\inf\{~\re z:~z\in W~\}.
\end{equation*}

Now the proof of the theorem is similar to that of Theorem \ref{main theorem}. We choose a point $w$ with large real part and a sequence of squares $P_{n}(w)$ exactly the same as those in \eqref{sequs}. In the same way we define the sets $T_n$ and $S$. Now there exists a countable subset $L_n$ of $T_n\cap P_n$ which satisfies conditions (i)-(v) as before. Now the proof follows if we can obtain appropriate estimates: the local estimate and the global estimate.

The local estimate is achieved by using the inequality (\ref{didi}). In terms of $\varphi$, where $\varphi(x)=\theta(e^x)$, the condition (\ref{didi}) can be written as
\begin{equation}\label{onekey}
\int_{cx}^{x}\varphi(s)ds=\int_{e^{cx}}^{e^x}\varphi(\log t)\frac{dt}{t}=\int_{e^{cx}}^{e^x}\theta(t)\frac{dt}{t}\geq \frac{x}{A(e^x)}=\frac{x}{A(E(x))}.
\end{equation}
Then since $63/65\leq c <1$, by letting $z_n:=G^{n}(z)$ we obtain, instead of \eqref{areaaa},
\begin{equation*}
\begin{aligned}
\area\left(S\cap Q\right) &\geq \left[\frac{\re z_n}{64\pi}\right]\int_{\frac{63}{64}\re z_n}^{\frac{65}{64}\re z_n}\varphi(t)dt\\
&\geq \left[\frac{\re z_n}{64\pi}\right] \int_{c\frac{65}{64}\re z_n}^{\frac{65}{64}\re z_n}\varphi(t)dt\\
&\geq \left[\frac{\re z_n}{64\pi}\right] \frac{\frac{65}{64}\re z_n}{A(E(\frac{65}{64}\re z_n))},
\end{aligned}
\end{equation*}
and
\begin{equation*}
\begin{aligned}
\dens\left(S, Q\right) &\geq \dfrac{C_0}{A\left(E^{2}(\re z_n)\right)}
\end{aligned}
\end{equation*}
for some constant $C_0 >0$. Therefore, we obtain the following local estimate as in \eqref{locale} by using Koebe's theorem as before:
\begin{equation}\label{ea}
\begin{aligned}
\area &\left(\left(T_{n-1}\setminus T_n\right) \cap D\left(z,\tau r_{n}(z)\right)\right)\\
&\geq \dfrac{C_1}{A\left(E^{2}(\re z_n)\right)}\cdot \area D\left(z,\tau r_{n}(z)\right),
\end{aligned}
\end{equation}
where $C_1$ is some positive constant.

To spread this to the global estimate, we need the fact that $A(r)<\log r$ for large $r$. So in this way we have an estimate for $\re G(z)$:
\begin{equation*}
\begin{aligned}
\re G(z) &\leq \log M(e^{\re z}, g)\\
&\leq \exp \left(A(e^{\re z})\cdot\re z  \right)\\
&\leq \exp^{2} (\re z),
\end{aligned}
\end{equation*}
which implies that
$$\re z_n =\re G^{n}(z)\leq \exp^{2n}(\re z)=E^{2n}(\re z).$$
Now by using \eqref{ea} the global estimate, which is similar to \eqref{globaless},  is given as follows:
\begin{equation*}
\begin{aligned}
\area \left(\left(T_{n-1}\setminus T_n\right) \cap P_{n-1}\right)&\geq \sum_{z\in L_{n}}\area \left(\left(T_{n-1} \setminus T_{n}\right) \cap D(z, \tau r_{n}(z))\right)\\
&\geq \sum_{z\in L_{n}} \dfrac{C_1}{A\left(E^{2}(\re z_n)\right)}\cdot \area D(z, \tau r_{n}(z))\\
&\geq \dfrac{C_1}{A\left(E^{2n+2}\left(\frac{65}{64}\re w\right)\right)} \sum_{z\in L_{n}} \area D(z, \tau r_{n}(z))\\
&\geq \frac{1}{16}\dfrac{C_1}{A\left(E^{2n+2}\left(\frac{65}{64}\re w\right)\right)}\cdot \area \left(T_{n}\cap P_n\right).
\end{aligned}
\end{equation*}

This, as in \eqref{infinite product}, implies that
\begin{equation*}
\begin{aligned}
\area\left(T\cap P_{\infty}\right)\leq \prod_{n=1}^{\infty}\frac{1}{1+ \dfrac{1}{16}\dfrac{C_1}{A\left(E^{2n+2}\left(\frac{65}{64}\re w\right)\right)}}\cdot \area\left(T_1 \cap P_1\right).
\end{aligned}
\end{equation*}
Since the function $A(r)$ is increasing, we can use similar argument as \eqref{compart2} to deduce from condition \eqref{di} that
\begin{equation*}
\sum_{n=1}^{\infty}\dfrac{1}{A\left(E^{2n+2}\left(\frac{65}{64}\re w\right)\right)} = \infty.
\end{equation*}
The rest is the same as before and we omit details here. Therefore, we see that $\area \j(g)=0$ for the disjoint type function $g$.

We still need to transfer this result to our original function $f$. This is done as in the proof of Theorem \ref{thm1.2}, using Theorem \ref{main theorem}. We omit details here. Therefore, we have the conclusion that $\area \i(f)=0$.


\end{proof}


\begin{proof}[Proof of Theorem \ref{infinite-order1}]

We define
$$g(z)=\frac{1}{f(z)-a}.$$
Since $f(z)$ has a finite direct singular value $a$, then $g(z)$ has a direct tract over infinity. Choose $\varepsilon>0$ small enough, and denote
$$\alpha(r):=\meas \left\{t\in [0, 2\pi]: \left|f(re^{it})-a\right|<\varepsilon\right\}.$$
We will follow the notations given in the introduction with respect to $f$. Recall that $\theta(r)=\meas\left\{t\in[0,2\pi]: \left|f(re^{it})\right|<e^R\right\}$, which is defined in \eqref{theta1}. By choosing $R>0$ large enough we can have $e^R > |a|+\varepsilon$. Therefore, we see that
\begin{equation}\label{comp}
\theta(r)\geq \alpha(r)
\end{equation}
holds for large $r$. On the other hand, $\alpha(r)$ also denotes the linear measure of the set $U_{0}:=\left\{t\in[0, 2\pi]: |g(re^{it})|>1/\varepsilon\right\}$. Applying Lemma \ref{tsuji} to the function $g$, with $G=U_{0}$ and $\beta(r)=\alpha(r)$ we see that
\begin{equation}
\log\log M_{U_{0}}(r,g)\geq \pi \int_{r_1}^{\alpha_0 r}\frac{dt}{t\cdot \alpha(t)}+ \mathcal{O}(1),
\end{equation}
where $r_1 >0$ and $0<\alpha_0<1$. This, together with (\ref{comp}), implies that
\begin{equation}
\begin{aligned}
\log\log M_{U_{0}}(r,g) &\geq \pi \int_{r_1}^{\alpha_0 r}\frac{dt}{t\cdot \theta(t)}+ \mathcal{O}(1)\\
&\geq \pi \int_{(\alpha_0 r)^c}^{\alpha_0 r}\frac{dt}{t\cdot \theta(t)}+ \mathcal{O}(1),
\end{aligned}
\end{equation}
here $c$ satisfies the condition from Theorem \ref{infinite-order2}.

Then by applying Lemma \ref{maxi}, Lemma \ref{nevan} and Lemma \ref{maxag}, and by taking $R=2r$ we have
\begin{equation*}
\begin{aligned}
\log M_{U_{0}}(r, g)&\leq 3T(2r,g)+\mathcal{O}(1)\\
&= 3 T(2r,f)+\mathcal{O}(1)\\
&\leq  3 \log M(2r,f)+\mathcal{O}(1).
\end{aligned}
\end{equation*}
Therefore, we get
\begin{equation}\label{cmpa}
\begin{aligned}
\pi \int_{(\alpha_0 r)^c}^{\alpha_0 r}\frac{dt}{t\cdot \theta(t)} &\leq \log \log M(2r,f)+ \mathcal{O}(1)\\
&\leq A(2r)\log(2r) +\mathcal{O}(1).
\end{aligned}
\end{equation}
By the Cauchy-Schwarz inequality,
\begin{equation*}
\begin{aligned}
\left(\log (\alpha_0 r)^{1-c} \right)^2 &= \left(\int_{(\alpha_0 r)^c}^{\alpha_0 r}\frac{dt}{t}\right)^2 =\left(\int_{(\alpha_0 r)^c}^{\alpha_0 r} \sqrt{\frac{\theta(t)}{t}}\frac{1}{\sqrt{\theta(t)t}}dt  \right)^2 \\
&\leq \int_{(\alpha_0 r)^c}^{\alpha_0 r} \theta(t) \frac{dt}{t} \cdot \int_{(\alpha_0 r)^c}^{\alpha_0 r} \frac{dt}{\theta(t)\cdot t}.
\end{aligned}
\end{equation*}
Together with (\ref{cmpa}) we see that
\begin{equation}\label{require}
\frac{1}{\log r}\int_{(\alpha_0 r)^c}^{\alpha_0 r} \theta(t) \frac{dt}{t} \geq \frac{C_0}{A(2r)}
\end{equation}
for large $r$, where $C_0>0$ is some constant. Thus, we obtain from \eqref{require} that
\begin{equation}\label{require11}
\begin{aligned}
\frac{1}{\log r}\int_{r^c}^{r} \theta(t) \frac{dt}{t} &\geq \frac{1}{\log (r/\alpha_0)}\int_{r^c}^{r} \theta(t) \frac{dt}{t}\\
&\geq \frac{C_0}{A(2r/\alpha_0)}:=\frac{1}{A_0(r)}.
\end{aligned}
\end{equation}
Thus, \eqref{didi} is satisfied with $A(r)$ replaced by $A_0 (r)$. Moreover, by using \eqref{di} it is easy to show that
\begin{equation}
\sum_{k=1}^{\infty}\frac{1}{A_0(E^{k}(0))}=\infty.
\end{equation}

Therefore, we can apply Theorem \ref{infinite-order2} to obtain that $\area\i(f)=0$.

\end{proof}

\begin{ack}
The author would like to thank his Ph.D. supervisor Walter Bergweiler for his constant help and guidance, and Lasse Rempe-Gillen for his valuable suggestions. I also thank the referee for helpful comments. I would also like to thank the China Scholarship Council for support.
\end{ack}


\begin{thebibliography}{BFRG15}

\bibitem[AB12]{aspenberg1}
M.~Aspenberg and W.~Bergweiler, \emph{Entire functions with {J}ulia sets of
  positive measure}, Math. Ann. \textbf{352~} (2012), no.~1, 27--54.

\bibitem[Ahl06]{ahlfors8}
L.~V. Ahlfors, \emph{Lectures on quasiconformal mappings}, second ed.,
  University Lecture Series, vol.~38, American Mathematical Society,
  Providence, RI, 2006.

\bibitem[BC16]{bergweiler17}
W.~Bergweiler and I.~Chyzhykov, \emph{Lebesgue measure of escaping sets of
  entire functions of completely regular growth}, J. London Math. Soc. (2)
  \textbf{94} (2016), no.~2, 639--661.

\bibitem[Ber93]{bergweiler1}
W.~Bergweiler, \emph{Iteration of meromorphic functions}, Bull. Amer. Math.
  Soc. \textbf{29} (1993), no.~2, 151--188.

\bibitem[BFRG15]{bergweiler6}
W.~Bergweiler, N.~Fagella, and L.~Rempe-Gillen, \emph{Hyperbolic entire
  functions with bounded {Fatou} components}, Comment. Math. Helv. \textbf{90}
  (2015), no.~4, 799--829.

\bibitem[BRS08]{bergweiler3}
W.~Bergweiler, P.~J. Rippon, and G.~M. Stallard, \emph{Dynamics of meromorphic
  functions with direct or logarithmic singularities}, Proc. London Math. Soc.
  (3) \textbf{97} (2008), no.~2, 368--400.

\bibitem[EL92]{eremenko2}
A.~Eremenko and M.~Lyubich, \emph{Dynamical properties of some classes of
  entire functions}, Ann. Inst. Fourier \textbf{42} (1992), no.~4, 989--1020.

\bibitem[Ere89]{eremenko3}
A.~Eremenko, \emph{On the iteration of entire functions}, Dynamical systems and
  ergodic theory ({W}arsaw, 1986), Banach Center Publ., vol.~23, PWN, Warsaw,
  1989, pp.~339--345.

\bibitem[Fal03]{falconer1}
K.~J. Falconer, \emph{Fractal geometry: Mathematical foundations and
  applications}, second ed., John Wiley \& Sons, Inc., Hoboken, NJ, 2003.

\bibitem[Fat26]{fatou4}
P.~Fatou, \emph{Sur l'it\'eration des fonctions transcendantes enti\`eres},
  Acta Math. \textbf{47} (1926), no.~4, 337--370.

\bibitem[GO08]{goldbergmero}
A.~A. Goldberg and I.~V. Ostrovskii, \emph{Value distribution of meromorphic
  functions}, Translations of Mathematical Monographs, vol. 236, American
  Mathematical Society, Providence, RI, 2008.

\bibitem[Hay64]{hayman1}
W.~K. Hayman, \emph{Meromorphic functions}, Oxford Mathematical Monographs,
  Clarendon Press, Oxford, 1964.

\bibitem[McM87]{mcmullen11}
C.~T. McMullen, \emph{Area and{ Hausdorff dimension of Julia} sets of entire
  functions}, Trans. Amer. Math. Soc. \textbf{300} (1987), no.~1, 329--342.

\bibitem[Mil06]{milnor10}
J.~Milnor, \emph{Dynamics in one complex variable}, Annals of Mathemtaics
  Studies, vol. 160, Princeton University Press, Princeton, NJ, 2006.

\bibitem[Mis81]{misiurewicz2}
M.~Misiurewicz, \emph{On iterates of {$e^{z}$}}, Ergodic Theory Dynam. Systems
  \textbf{1} (1981), no.~1, 103--106.

\bibitem[Nev70]{nevanlinna}
R.~Nevanlinna, \emph{Analytic functions}, Translated from the second German
  edition by Phillip Emig. Die Grundlehren der mathematischen Wissenschaften,
  Band 162, Springer-Verlag, New York-Berlin, 1970.

\bibitem[Pom92]{pommerenke1}
Ch. Pommerenke, \emph{Boundary behaviour of conformal maps}, Grundlehren der
  mathematischen Wissenschaften, vol. 299, Springer-Verlag, Berlin, 1992.

\bibitem[Rem09]{rempe8}
L.~Rempe, \emph{Rigidity of escaping dynamics for transcendental entire
  functions}, Acta Math. \textbf{203} (2009), no.~2, 235--267.

\bibitem[RG16]{rempe9}
L.~Rempe-Gillen, \emph{Arc-like continua, {Julia sets of entire functions, and
  Eremenko's Conjecture}}, arXiv:1610.06278 (2016).

\bibitem[RGS17]{rempe16}
L.~Rempe-Gillen and D.~J. Sixsmith, \emph{Hyperbolic entire functions and the
  {Eremenko-Lyubich} class: {Class} $\mathcal{B}$ or not class $\mathcal{B}$?},
  Math. Z. \textbf{286} (2017), no.~3--4, 783--800.

\bibitem[RRRS11]{rempe2}
G.~Rottenfusser, J.~R\"uckert, L.~Rempe, and D.~Schleicher, \emph{Dynamic rays
  of bounded-type entire functions}, Ann. of Math. (2) \textbf{173} (2011),
  no.~1, 77--125.

\bibitem[Sch10]{schleicher3}
D.~Schleicher, \emph{Dynamics of entire functions}, Holomorphic dynamical
  systems, Lecture Notes in Math., vol. 1998, Springer, Berlin, 2010,
  pp.~295--339.

\bibitem[Tsu75]{tsuji1975potential}
M.~Tsuji, \emph{Potential theory in modern function theory}, Chelsea Publishing
  Co., New York, 1975.

\end{thebibliography}

\end{document}